\numberwithin{equation}{section}
\numberwithin{figure}{section}
\theoremstyle{plain}
\newtheorem{thm}{\protect\theoremname}[section]
  \theoremstyle{definition}
  \newtheorem{defn}[thm]{\protect\definitionname}
  \theoremstyle{plain}
  \newtheorem{prop}[thm]{\protect\propositionname}
  \theoremstyle{remark}
  \theoremstyle{plain}
  \theoremstyle{plain}
  \theoremstyle{plain}
  \newtheorem{lem}[thm]{\protect\lemmaname}
  \providecommand{\corollaryname}{Corollary}
  \providecommand{\definitionname}{Definition}
  \providecommand{\factname}{Fact}
  \providecommand{\lemmaname}{Lemma}
  \providecommand{\propositionname}{Proposition}
  \providecommand{\remarkname}{Remark}
\providecommand{\theoremname}{Theorem}
\begin{document}

\title[Torsion Discriminance for Stability of Linear Time-Invariant Systems]
{Torsion Discriminance for Stability of Linear Time-Invariant Systems}

\author[Y. Wang]{Yuxin Wang $^1$}
\author[H. Sun]{Huafei Sun $^{1,*}$}
\author[Y. Cao]{Yueqi Cao $^1$}
\author[S. Zhang]{Shiqiang Zhang $^1$}

\thanks{This subject is supported by the National Natural Science Foundations of China (No. 61179031.)}
\thanks{$^1$ School of Mathematics and Statistics, Beijing Institute of Technology, Beijing 100081, P.~R.~China}
\thanks{$^*$ Huafei Sun is the corresponding author}
\thanks{E-mail: wangyuxin@bit.edu.cn,~ huafeisun@bit.edu.cn,~ bityueqi@gmail.com,~ zsqbit@126.com}

\begin{abstract}
This paper proposes a new approach to describe the stability of linear time-invariant systems via the torsion $\tau(t)$ of the state trajectory. For a system $\dot{r}(t)=Ar(t)$ where $A$ is invertible, we show that
(1) if there exists a measurable set $E_1$ with positive Lebesgue measure,
such that $r(0)\in E_1$ implies that
$\lim\limits_{t\to+\infty}\tau(t)\neq0$ or $\lim\limits_{t\to+\infty}\tau(t)$ does not exist,
then the zero solution of the system is stable;
(2) if there exists a measurable set $E_2$ with positive Lebesgue measure,
such that $r(0)\in E_2$ implies that
$\lim\limits_{t\to+\infty}\tau(t)=+\infty$,
then the zero solution of the system is asymptotically stable.
Furthermore, we establish a relationship between the $i$th curvature $(i=1,2,\cdots)$ of the trajectory and the stability of the zero solution when $A$ is similar to a real diagonal matrix.
\end{abstract}

\keywords{linear systems, stability, asymptotic stability, torsion, curvature}

\subjclass[2010]{53A04 93C05 93D05 93D20}

\maketitle

\section{Introduction}
\par
It is well known that Lyapunov \cite{Lyapunov} laid the foundation of stability theory. Linear systems are the most basic and widely used research objects, which have been developed for a long period. However, the traditional methods rely heavily on linear algebra. There are few results obtained from geometric aspects. 

\par
Curvature and torsion are important concepts in differential geometry. In \cite{Wang}, Wang et al. gave a description of the stability for two- and three-dimensional linear time-invariant systems $\dot{r}(t)=Ar(t)$ by calculating the curvature and torsion of the state trajectory $r(t)$ in each case. Furthermore, in \cite{Wang2} the authors use the definition of higher curvatures of curves in $\mathbb{R}^n$ given in \cite{Gluck} to obtain the relationship between the first curvature of the state trajectory and the stability of the $n$-dimensional linear system.

\par
In this paper, we will describe the stability of the zero solution of $n$-dimensional linear time-invariant system by using the torsion, namely, the second curvature.

\par
Our main results are as follows.

\begin{thm} \label{thm main 1}
Suppose that $\dot{r}(t)=Ar(t)$ is a linear time-invariant system, where $A$ is similar to an $n\times n$ real diagonal matrix, $r(t)\in\mathbb{R}^n$, and $\dot{r}(t)$ is the derivative of $r(t)$.
Denote by $\kappa_i(t)\,(i=1,2,\cdots)$ the $i$th curvature of trajectory of a solution $r(t)$. We have
\par
$(1)$ if there exists a measurable set $E\subseteq \mathbb{R}^n$ whose Lebesgue measure is greater than $0$, such that $r(0)\in E$ implies that $\lim\limits_{t\to+\infty}\kappa_i(t)\neq0$ or $\lim\limits_{t\to+\infty}\kappa_i(t)$ does not exist, then the zero solution of the system is stable;
\par
$(2)$ if $A$ is invertible, then under the assumptions of $(1)$, the zero solution of the system is asymptotically stable.
\end{thm}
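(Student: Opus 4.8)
The plan is to establish the contrapositive of $(1)$: if the zero solution is \emph{not} stable, I show that for \emph{almost every} $r(0)$ one has $\lim_{t\to+\infty}\kappa_i(t)=0$ for every $i$ for which $\kappa_i(t)$ is defined for all large $t$; then no set $E$ as in the statement can have positive measure. First I diagonalise: $A=P\,\mathrm{diag}(\lambda_1,\dots,\lambda_n)\,P^{-1}$ with all $\lambda_j\in\mathbb{R}$, columns $v_1,\dots,v_n$ of $P$, and $c=P^{-1}r(0)$, so that $r^{(k)}(t)=\sum_j c_j\lambda_j^{k}e^{\lambda_j t}v_j$. Since $A$ is similar to a real diagonal matrix, non-stability means $\lambda^{*}:=\max_j\lambda_j>0$. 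For $k\ge1$ the $\lambda_j=0$ summands disappear from $r^{(k)}(t)$, so the curvatures depend only on the nonzero eigenvalues; list the distinct nonzero ones as $\nu_1>\nu_2>\dots>\nu_p$ (hence $\nu_1=\lambda^{*}>0$), and let $w_l$ be the part of $r(0)$ lying in the $\nu_l$-eigenspace. The bad set $\bigcup_{l=1}^{p}\{r(0):w_l=0\}$ is a finite union of proper subspaces, hence Lebesgue-null; I fix $r(0)$ outside it, so each $w_l\neq0$ and $w_1,\dots,w_k$ are linearly independent for every $k\le p$ (nonzero vectors from distinct eigenspaces).

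The technical core is an asymptotic expansion of the Frenet volumes $V_k(t):=|r'(t)\wedge\cdots\wedge r^{(k)}(t)|$, $V_0\equiv1$. Since $e^{-\nu_1 t}r^{(k)}(t)=\nu_1^{k}w_1+\sum_{l\ge2}\nu_l^{k}e^{(\nu_l-\nu_1)t}w_l\to\nu_1^{k}w_1$, the rescaled derivative vectors all collapse onto the line $\mathbb{R}w_1$, so each $V_k$ is exponentially small compared to $e^{k\nu_1 t}$. Writing $[\,r'(t)\mid\cdots\mid r^{(k)}(t)\,]=W\,\mathrm{diag}(e^{\nu_l t})\,V^{\top}$ with $W=[\,w_1\mid\cdots\mid w_p\,]$ and $V$ the $k\times p$ matrix $(\nu_l^{a})_{a,l}$, then applying the Cauchy--Binet formula and evaluating the Vandermonde determinant that arises, I expect to obtain
\[
V_k(t)=C_k\,e^{(\nu_1+\cdots+\nu_k)t}\bigl(1+o(1)\bigr),\qquad 1\le k\le p,
\]
with $C_k=\bigl|\textstyle\prod_{l=1}^{k}\nu_l\bigr|\prod_{1\le l<l'\le k}|\nu_{l'}-\nu_l|\cdot\mathrm{vol}(w_1,\dots,w_k)>0$ for this $r(0)$, while $V_k\equiv0$ for $k>p$ because $r$ lies in the $p$-dimensional span of the $w_l$. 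Consequently $\kappa_i(t)$ is defined for all large $t$ exactly when $1\le i\le p-1$, and $\kappa_i\equiv0$ for $i\ge p$.

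Next I substitute this into the formula $\kappa_i(t)=\frac{V_{i+1}(t)\,V_{i-1}(t)}{V_i(t)^2\,V_1(t)}$ for the $i$th curvature (cf.~\cite{Gluck}). For $1\le i\le p-1$ all four volumes are eventually positive, and collecting exponents gives
\[
\kappa_i(t)=\frac{C_{i+1}C_{i-1}}{C_i^{2}C_1}\,e^{(\nu_{i+1}-\nu_i-\nu_1)t}\bigl(1+o(1)\bigr),\qquad C_0:=1,
\]
because $(\nu_1+\cdots+\nu_{i+1})+(\nu_1+\cdots+\nu_{i-1})-2(\nu_1+\cdots+\nu_i)-\nu_1=\nu_{i+1}-\nu_i-\nu_1$. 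As $\nu_{i+1}-\nu_i<0$ and $\nu_1=\lambda^{*}>0$, the exponent is strictly negative and $\kappa_i(t)\to0$; for $i\ge p$, $\kappa_i\equiv0$ trivially. So for every $r(0)$ outside a Lebesgue-null set every defined curvature tends to $0$, contradicting the existence of $E$, which proves $(1)$. Part $(2)$ is then immediate: by $(1)$ the zero solution is stable, hence every $\lambda_j\le0$; invertibility of $A$ rules out $\lambda_j=0$, so every $\lambda_j<0$, and since $A$ is similar to a real diagonal matrix this yields $r(t)\to0$ for all initial values, i.e.\ asymptotic stability.

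The step I expect to be the main obstacle is the volume expansion---concretely, proving that the leading coefficient $C_k$ is nonzero for almost every $r(0)$. This means tracking, through Cauchy--Binet, which $k$-element subsets of $\{\nu_1,\dots,\nu_p\}$ contribute to $\det\mathrm{Gram}(r',\dots,r^{(k)})$, verifying that the subset of the $k$ largest eigenvalues dominates as $t\to\infty$ with a nonvanishing Vandermonde coefficient, and confining every degeneracy (coincident eigenvalues shrinking $p$, a zero eigenvalue, or a vanishing component $w_l$) to a Lebesgue-null set of initial conditions. Once that expansion is secured, the curvature estimate and the exponent arithmetic are routine.
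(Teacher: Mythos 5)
Your proposal is correct and follows essentially the same route as the paper: both identify the dominant exponential in $V_k^2(t)$ as $\mathrm{e}^{2(\nu_1+\cdots+\nu_k)t}$ taken over the $k$ largest distinct nonzero eigenvalues (with a positive Vandermonde-type coefficient once a Lebesgue-null set of initial values is excluded), and both reduce the claim to the exponent arithmetic $\nu_{i+1}-\nu_i-\nu_1<0$ forced by instability, with part (2) following from invertibility exactly as you state. The only cosmetic difference is that you run Cauchy--Binet directly in the original (non-orthogonal) eigenbasis, whereas the paper first passes to the equivalent diagonal system and invokes the invariance of curvature limits under equivalence transformations (Proposition \ref{prop relation}).
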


\begin{thm} \label{thm main 2}
Suppose that $\dot{r}(t)=Ar(t)$ is a linear time-invariant system, where $A$ is an $n\times n$ invertible real matrix, and $r(t)\in\mathbb{R}^n$.
Denote by $\tau(t)$ the torsion of trajectory of a solution $r(t)$. We have
\par
$(1)$ if there exists a measurable set $E_1\subseteq \mathbb{R}^n$ whose Lebesgue measure is greater than $0$, such that $r(0)\in E_1$ implies that $\lim\limits_{t\to+\infty}\tau(t)\neq0$ or $\lim\limits_{t\to+\infty}\tau(t)$ does not exist, then the zero solution of the system is stable;
\par
$(2)$ if there exists a measurable set $E_2\subseteq \mathbb{R}^n$ whose Lebesgue measure is greater than $0$, such that $r(0)\in E_2$ implies that $\lim\limits_{t\to+\infty}\tau(t)=+\infty$, then the zero solution of the system is asymptotically stable.
\end{thm}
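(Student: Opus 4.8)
The plan is to argue by contraposition. Since $A$ is invertible, the zero solution of $\dot r=Ar$ is asymptotically stable iff the spectral abscissa $\sigma:=\max_{j}\operatorname{Re}\lambda_{j}(A)$ is negative, is stable but not asymptotically stable iff $\sigma=0$ and every (then purely imaginary) eigenvalue on the imaginary axis is semisimple, and is unstable otherwise. From $r(t)=e^{tA}r(0)$ and $r^{(k)}(t)=A^{k}r(t)$, the absolute value of the torsion equals
\[
\tau(t)=\frac{\|r'(t)\wedge r''(t)\wedge r'''(t)\|}{\|r'(t)\wedge r''(t)\|^{2}}=\Phi\big(r(t)\big),\qquad
\Phi(y):=\frac{\|Ay\wedge A^{2}y\wedge A^{3}y\|}{\|Ay\wedge A^{2}y\|^{2}},
\]
a function of $r(t)$ alone, positively homogeneous of degree $-1$, continuous off the algebraic set $\Sigma:=\{y:Ay\wedge A^{2}y=0\}=\{y:Ay\text{ is an eigenvector of }A\}$; the sign of $\tau$ (only when $n=3$) is irrelevant to the conditions ``$\tau\to0$'', ``$\tau$ bounded'', ``$\tau\to+\infty$''. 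It suffices to prove, for all $r(0)$ outside a Lebesgue-null set $Z$: (i) if the system is unstable then $\lim_{t\to+\infty}\tau(t)=0$; (ii) if the system is stable but not asymptotically stable then $\tau(t)$ is bounded. Granting these, since $E_{1},E_{2}$ have positive measure they meet $\mathbb R^{n}\setminus Z$, so $r(0)\in E_{1}$ with $\tau(t)\not\to0$ contradicts (i) unless the system is stable (part $(1)$), and $r(0)\in E_{2}$ with $\tau(t)\to+\infty$ contradicts (i) and (ii) unless the system is asymptotically stable (part $(2)$). Two preliminary reductions are used. First, one may take $A$ in real Jordan form: for a linear isomorphism $P$, $Pu_{1}\wedge\cdots\wedge Pu_{k}=(\Lambda^{k}P)(u_{1}\wedge\cdots\wedge u_{k})$, so each $k$-fold wedge norm changes by a factor in $[\|\Lambda^{k}P^{-1}\|^{-1},\|\Lambda^{k}P\|]$, whence the torsions of the trajectories of $\dot x=Jx$ and of $\dot r=(PJP^{-1})r$ (through $x(0)$ and $Px(0)$) differ by a bounded factor, and ``$\to0$'', ``bounded'', ``$\to+\infty$'' and ``null set'' are preserved. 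Second, for $r(0)$ off a finite union of proper subspaces every eigenmode of $A$ is activated with maximal polynomial degree, $r(t)$ is blockwise $e^{\alpha_{\ell}t}$ times polynomials in $t$ times $\cos\beta_{\ell}t,\sin\beta_{\ell}t$, and $r(0)$ lies in no eigenspace of $A$, so that $\|r'(t)\wedge r''(t)\|>0$ for every $t\ge0$ and $\tau$ is a genuine continuous function on $[0,\infty)$.

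For (ii): split $r(t)=r_{c}(t)+r_{s}(t)$ into its components in the center subspace $W_{c}$ and the stable subspace. All imaginary eigenvalues being semisimple, $\|r_{c}(t)\|$ is constant in these coordinates, so $\asymp1$, while $r_{s}(t)\to0$; hence $\|r(t)\|$ is bounded above and, for $r(0)$ with $r_{c}(0)\neq0$ (generic), bounded away from $0$, and $r(t)/\|r(t)\|$ eventually lies in any fixed neighborhood of the unit sphere $S(W_{c})$. Now $A|_{W_{c}}$ has only purely imaginary eigenvalues, hence no real eigenvector, so $S(W_{c})\cap\Sigma=\emptyset$ and $\Phi$ is continuous, hence bounded, on a neighborhood of the compact set $S(W_{c})$; therefore $\tau(t)=\|r(t)\|^{-1}\Phi(r(t)/\|r(t)\|)$ is bounded. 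This is (ii).

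For (i): if $\sigma>0$ then $\|r(t)\|\to+\infty$ (generically), and if $\sigma=0$ (forcing a defective imaginary eigenvalue, since the semisimple case is (ii)) then $\|r(t)\|\to+\infty$ polynomially; in either case it is enough that $\Phi(r(t)/\|r(t)\|)$ stay bounded. The normalized solution approaches the unit sphere $S(W_{\sigma})$ of the dominant subspace $W_{\sigma}$, spanned by the eigenmodes of real part $\sigma$. If $\sigma$ is not a real eigenvalue of $A$ — which includes every case with $\sigma=0$, as $0\notin\operatorname{spec}A$ — then $A|_{W_{\sigma}}$ has no real eigenvector, so $S(W_{\sigma})\cap\Sigma=\emptyset$, $\Phi$ is bounded near $S(W_{\sigma})$, and $\tau(t)\to0$. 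If $\sigma>0$ is a real eigenvalue, $S(W_{\sigma})$ meets $\Sigma$ at the associated eigendirections, and one must verify that $r(t)/\|r(t)\|$ reaches these along directions dictated by the second-largest eigenmode, whose eigenspace is independent of that of $\sigma$; a Taylor expansion of $\Phi$ at an eigendirection then shows these are not blow-up directions of $\Phi$, so $\Phi(r(t)/\|r(t)\|)$ stays bounded and $\tau(t)\to0$. The underlying mechanism is a rate count: expanding $r',r'',r'''$ gives $\|r'\wedge r''\|\asymp t^{P_{2}}e^{\nu_{2}t}$ and $\|r'\wedge r''\wedge r'''\|\asymp t^{P_{3}}e^{\nu_{3}t}$, up to bounded oscillation, with leading multivectors nonzero for generic $r(0)$ (the $\Lambda^{2}$ one being built from factors $c_{i}c_{j}\lambda_{i}\lambda_{j}(\lambda_{j}-\lambda_{i})\,v_{i}\wedge v_{j}$ and a nonzero trigonometric multivector polynomial, none degenerating), and $\nu_{2}=\sigma_{(1)}+\sigma_{(2)}$, $\nu_{3}=\sigma_{(1)}+\sigma_{(2)}+\sigma_{(3)}$ for the three largest eigenmode-abscissae $\sigma_{(1)}\ge\sigma_{(2)}\ge\sigma_{(3)}$, so $\nu_{3}-2\nu_{2}=\sigma_{(3)}-\sigma_{(1)}-\sigma_{(2)}\le-\sigma<0$ (with an auxiliary $P_{3}<2P_{2}$ in the degenerate case $\sigma=0$), whence $\tau(t)\to0$. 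This is (i).

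The exceptional set $Z$ — the proper subspaces and algebraic loci above (solutions lying in a proper invariant subspace, where $\tau$ is undefined or identically $0$; and degeneracies of the leading multivectors) — is Lebesgue-null, which is all that was used. The chief obstacle I foresee is case (i) when the critical line carries a real eigenvalue or a defective block: one must pin down the direction along which the normalized solution reaches $\Sigma$ and confirm it is not a genuine blow-up direction of $\Phi$, equivalently carry out the $\nu_{2},\nu_{3},P_{2},P_{3}$ bookkeeping (with ties among eigenmodes and with Jordan blocks) precisely enough to rule out the denominator $\|r'\wedge r''\|^{2}$ collapsing faster than the numerator $\|r'\wedge r''\wedge r'''\|$; the ``center sphere'' plus homogeneity argument disposes of all the remaining cases cleanly.
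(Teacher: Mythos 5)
Your skeleton is the same as the paper's (contraposition, reduction to real Jordan form with torsion-limits preserved up to bounded factors, a Lebesgue-null exceptional set of initial values), and your device $\tau(t)=\Phi(r(t))$ with $\Phi$ positively homogeneous of degree $-1$ and continuous off $\Sigma=\{y: Ay\wedge A^2y=0\}$ is a genuinely nicer way to dispatch several cases than the paper's term-by-term exponent bookkeeping: it cleanly gives boundedness of $\tau$ in the stable-not-asymptotically-stable case (the paper's Lemma on $C_1(0,b)$ blocks), and $\tau\to0$ whenever $\|r(t)\|\to\infty$ and the dominant spectral level carries no real eigenvalue, since then the limit sphere $S(W_\sigma)$ is compact and disjoint from $\Sigma$ (this covers the paper's Lemma on $M=0$ with a $C_m(0,b)$, $m\geqslant2$, block without any comparison of polynomial degrees). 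However, there is a genuine gap in case (i) exactly where you say the obstacle lies, and it is not a removable technicality: whenever $\sigma=\max\mathrm{Re}\,\lambda>0$ is attained at a \emph{real} eigenvalue, the normalized trajectory converges \emph{into} $\Sigma$, $\Phi$ is unbounded near its limit, and the homogeneity argument gives nothing; everything then rests on the quantitative rate count, which you state but do not carry out, and whose stated form is incorrect in the decisive subcase.

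Concretely: if the dominant real eigenvalue $M=\sigma$ occurs in two or more $1\times1$ Jordan blocks (a semisimple repeated real eigenvalue), your leading $\Lambda^2$ multivector has coefficient $c_ic_j\lambda_i\lambda_j(\lambda_j-\lambda_i)$ with $\lambda_i=\lambda_j=M$, which vanishes \emph{identically}, not just for special $r(0)$. Hence $\nu_2\neq\sigma_{(1)}+\sigma_{(2)}=2M$; the true rate of $\|r'\wedge r''\|$ drops to $e^{(M+N)t}$, where $N$ is the largest real part below $M$ (among blocks that are not $J_1(0)$), and the desired inequality $\nu_3-2\nu_2<0$ is no longer automatic: you must additionally show that the numerator's rate drops correspondingly, namely that at most one row of each $3\times3$ minor of $(r',r'',r''')$ can come from the $M$-level (two such rows are proportional), so that $V_3^2\lesssim e^{2(M+2N)t}$ while $V_2^4\gtrsim e^{4(M+N)t}$, giving the margin $-2M<0$. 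This is precisely the paper's case (c) of its lemma for $M>0$, and it is the heart of the proof of part (1) (part (2) also depends on it, since the unstable case must yield $\tau\to0$ there too). A similar, though easier, verification is needed when $M$ sits in a $J_p(M)$ block with $p\geqslant2$, where the limit direction is again an eigendirection in $\Sigma$ but the within-block polynomial structure keeps $V_2^2\asymp e^{4Mt}t^{4(p-2)}$ nondegenerate. Until this bookkeeping is actually done (with ties among eigenvalues and with Jordan blocks), your proof of (i) is incomplete.
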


\par
The paper is organized as follows.
In Section \ref{Section Preliminaries}, we review some basic concepts and propositions.
In Section \ref{Section Diagonal}, we study the relationship between the $i$th curvature $(i=1,2,\cdots)$ of the trajectory and the stability of the zero solution of the system when the system matrix is similar to a real diagonal matrix, and we prove Theorem \ref{thm main 1}.
In Section \ref{Section Torsion}, we establish a relationship between the torsion of the trajectory and the stability of the zero solution of the system, and complete the proof of Theorem \ref{thm main 2}.
Two examples are given in Section \ref{Section Examples}.
Finally, Section \ref{Section Conclusion} concludes the paper.

\section{Preliminaries}\label{Section Preliminaries}
Throughout this paper, all vectors will be written as column vectors,
and $\|x\|$ will denote the Euclidean norm of $x=(x_1,x_2,\cdots,x_n)^\mathrm{T}\in\mathbb{R}^n$, namely, $\|x\|=\sqrt{\sum_{i=1}^n x_i^2}$.
The vector $r^{(i)}(t)$ denotes the $i$th derivative of vector $r(t)$.
We denote by $\det A$ the determinant of matrix $A$.
The eigenvalues of matrix $A$ are denoted by $\lambda_i(A)\,(i=1,2,\cdots,n)$,
and the set of eigenvalues of matrix $A$ is denoted by $\sigma(A)$.
The degree of polynomial $f(t)$ is denoted by $\deg(f(t))$.

\subsection{Stability of Linear Time-Invariant Systems}
%

\begin{defn}[\!\!\cite{Perko}]
The system of ordinary differential equations
\begin{align}\label{system1}
\dot{r}(t)=Ar(t)
\end{align}
is called a linear time-invariant system, where $A$ is an $n\times n$ real constant matrix, $r(t)\in\mathbb{R}^n$, and $\dot{r}(t)$ is the derivative of $r(t)$.
\end{defn}

\begin{prop}[\!\!\cite{Perko}]\label{ODE}
The initial value problem
\begin{align}
\left\{
\begin{aligned}\label{system}
\dot{r}(t)&=Ar(t),\\
r(0)&=r_0,
\end{aligned}
\right.
\end{align}
has a unique solution given by
\begin{align}\label{align solution}
r(t)=\mathrm{e}^{tA}r_0,
\end{align}
where $\mathrm{e}^{tA}=\sum_{k=0}^\infty {\frac{t^k A^k}{k!}}$.
\par
The curve $r(t)$ is called the trajectory of the system (\ref{system}) with the initial value $r_0\in\mathbb{R}^n$.
\end{prop}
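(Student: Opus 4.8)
The plan is to prove the two assertions of the proposition---existence of a solution given by the matrix exponential, and its uniqueness---in three stages: first establish that the series defining $\mathrm{e}^{tA}$ converges, then verify by termwise differentiation that $r(t)=\mathrm{e}^{tA}r_0$ solves the initial value problem (\ref{system}), and finally show that no other solution exists.

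First I would show that $\mathrm{e}^{tA}=\sum_{k=0}^\infty \frac{t^k A^k}{k!}$ is well defined in the space of $n\times n$ real matrices equipped with an operator norm $\|\cdot\|$. Using submultiplicativity, $\|A^k\|\le\|A\|^k$, so each term obeys $\left\|\frac{t^k A^k}{k!}\right\|\le \frac{|t|^k\,\|A\|^k}{k!}$, and since $\sum_{k=0}^\infty \frac{|t|^k\,\|A\|^k}{k!}=\mathrm{e}^{|t|\,\|A\|}<\infty$, the Weierstrass $M$-test yields absolute and uniform convergence of the series on every compact $t$-interval. Because the space of matrices is complete, the limit $\mathrm{e}^{tA}$ exists and is continuous in $t$. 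The same bound applied to $\sum_{k=1}^\infty \frac{t^{k-1}A^k}{(k-1)!}$ shows that the termwise-differentiated series also converges uniformly on compacts, which legitimizes differentiating under the sum to obtain $\frac{d}{dt}\mathrm{e}^{tA}=\sum_{k=1}^\infty \frac{t^{k-1}A^k}{(k-1)!}=A\,\mathrm{e}^{tA}$. Consequently $r(t)=\mathrm{e}^{tA}r_0$ satisfies $\dot{r}(t)=A\,\mathrm{e}^{tA}r_0=Ar(t)$, while $r(0)=\mathrm{e}^{0}r_0=r_0$, so (\ref{align solution}) is indeed a solution.

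For uniqueness, let $\rho(t)$ be any solution of (\ref{system}) and set $g(t)=\mathrm{e}^{-tA}\rho(t)$. Since $A$ commutes with every partial sum of the exponential series, it commutes with $\mathrm{e}^{-tA}$, and by the computation above $\frac{d}{dt}\mathrm{e}^{-tA}=-A\,\mathrm{e}^{-tA}$. Differentiating by the product rule gives $\dot{g}(t)=-A\,\mathrm{e}^{-tA}\rho(t)+\mathrm{e}^{-tA}\dot{\rho}(t)=-A\,\mathrm{e}^{-tA}\rho(t)+\mathrm{e}^{-tA}A\rho(t)=0$, so $g$ is constant. Evaluating at $t=0$ yields $g(t)=g(0)=\rho(0)=r_0$, and applying $\mathrm{e}^{tA}$ (using $\mathrm{e}^{tA}\mathrm{e}^{-tA}=I$) forces $\rho(t)=\mathrm{e}^{tA}r_0$. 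This argument establishes existence and uniqueness simultaneously and shows the solution is global.

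The main obstacle I anticipate is purely analytic rather than algebraic: carefully justifying the interchange of differentiation and summation, which rests on uniform convergence of the differentiated series, together with the commutation identity $A\,\mathrm{e}^{-tA}=\mathrm{e}^{-tA}A$ (inherited from $A$ commuting with each $A^k$ and hence with the limit). Once these two points are secured, the formal manipulations in the existence step and the constancy of $g(t)$ in the uniqueness step follow directly.
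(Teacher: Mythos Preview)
Your argument is correct and is essentially the standard proof of this classical result. Note, however, that the paper does not supply its own proof of this proposition: it is stated as a preliminary fact cited from Perko~\cite{Perko}, so there is no in-paper proof to compare against. Your three-step outline (convergence of the matrix exponential via the Weierstrass $M$-test, termwise differentiation to verify the ODE, and uniqueness via the auxiliary function $g(t)=\mathrm{e}^{-tA}\rho(t)$) is exactly the textbook approach and would serve perfectly well as a self-contained proof.
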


\begin{defn}[\!\!\cite{Chen,Marsden}]
The solution $r(t)\equiv0$ of differential equations (\ref{system1}) is called the zero solution of the linear time-invariant system.
If for every constant $\varepsilon >0$,
there exists a $\delta=\delta(\varepsilon)>0$,
such that $\|r(0)\|<\delta $ implies that $\|r(t)\|<\varepsilon$ for all $t\in [0,+\infty)$,
where $r(t)$ is a solution of (\ref{system1}),
then we say that the zero solution of system (\ref{system1}) is stable.
If the zero solution is not stable, then we say that it is unstable.
\par
Suppose that the zero solution of system (\ref{system1}) is stable,
and there exists a $\tilde \delta\,(0<\tilde \delta\leqslant\delta)$,
such that $\|r(0)\|<\tilde \delta$ implies that $\lim\limits_{t\to+\infty}r(t)=0$,
then we say that the zero solution of system (\ref{system1}) is asymptotically stable.
\end{defn}

\begin{prop}[\!\!\cite{Chen}]\label{prop asy.stable}
The zero solution of system (\ref{system1}) is stable if and only if all eigenvalues of matrix $A$ have nonpositive real parts and those eigenvalues with zero real parts are simple roots of the minimal polynomial of $A$.
\par
The zero solution of system (\ref{system1}) is asymptotically stable if and only if all eigenvalues of matrix $A$ have negative real parts, namely,
$\mathrm{Re}\{\lambda_i(A)\}<0\,(i=1,2,\cdots,n)$.
\end{prop}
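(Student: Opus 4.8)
The plan is to reduce both equivalences to the asymptotic behavior of the matrix exponential, which by Proposition~\ref{ODE} governs every solution $r(t)=\mathrm{e}^{tA}r_0$. The first step is to record the standard reformulation, valid because the system is linear: the zero solution is stable if and only if $M:=\sup_{t\ge 0}\|\mathrm{e}^{tA}\|<\infty$, and asymptotically stable if and only if in addition $\mathrm{e}^{tA}\to 0$ as $t\to+\infty$. For the stability equivalence, if $M<\infty$ then $\|r(0)\|<\varepsilon/M$ forces $\|r(t)\|\le M\|r(0)\|<\varepsilon$; conversely, taking $\varepsilon=1$ with its $\delta$ and applying the stability estimate to the rescaled initial vector $\frac{\delta}{2\|r_0\|}r_0$ gives $\|\mathrm{e}^{tA}\|\le 2/\delta$ for all $t$. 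The asymptotic part is handled the same way, using that $r(t)\to0$ for all small initial data forces $\mathrm{e}^{tA}u\to0$ for every unit vector $u$, hence $\mathrm{e}^{tA}\to 0$ by finite-dimensionality.

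Second, I would pass to the Jordan canonical form $A=PJP^{-1}$, with $J$ a direct sum of blocks $J_\ell=\lambda_\ell I+N_\ell$, $N_\ell$ nilpotent of index $m_\ell$ equal to the block size (working over $\mathbb{C}$, or with the real Jordan form for conjugate pairs). Then $\mathrm{e}^{tA}=P\mathrm{e}^{tJ}P^{-1}$, where $\mathrm{e}^{tJ}$ is block-diagonal with $\ell$th block $\mathrm{e}^{t\lambda_\ell}\sum_{j=0}^{m_\ell-1}\frac{t^j}{j!}N_\ell^j$, whose entries are constant multiples of $t^j\mathrm{e}^{t\lambda_\ell}$ for $0\le j\le m_\ell-1$. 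Consequently $\|\mathrm{e}^{tJ_\ell}\|$ is comparable, for $t\ge 1$, to $t^{m_\ell-1}\mathrm{e}^{t\,\mathrm{Re}\,\lambda_\ell}$, and since $P$ and $P^{-1}$ are fixed invertible matrices, $\|\mathrm{e}^{tA}\|$ is bounded (resp.\ tends to $0$) precisely when each $t^{m_\ell-1}\mathrm{e}^{t\,\mathrm{Re}\,\lambda_\ell}$ is bounded (resp.\ tends to $0$).

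Third, I would translate these scalar conditions. The quantity $t^{m_\ell-1}\mathrm{e}^{t\,\mathrm{Re}\,\lambda_\ell}$ is bounded iff $\mathrm{Re}\,\lambda_\ell<0$, or $\mathrm{Re}\,\lambda_\ell=0$ and $m_\ell=1$; it tends to $0$ iff $\mathrm{Re}\,\lambda_\ell<0$. Ranging over all blocks yields the asymptotic-stability criterion at once. For the stability criterion it remains to invoke the standard linear-algebra fact that the multiplicity of $(x-\lambda)$ in the minimal polynomial of $A$ equals the size of the largest Jordan block attached to $\lambda$; hence all blocks for an eigenvalue $\lambda$ have size $1$ exactly when $\lambda$ is a simple root of the minimal polynomial, and combining with $\mathrm{Re}\,\lambda\le 0$ for all eigenvalues recovers the stated condition.

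The one point that requires genuine care is the two-sided passage between the $\varepsilon$--$\delta$ definitions and the uniform operator-norm statements about $\mathrm{e}^{tA}$, in particular using linearity correctly to upgrade ``$\|r(t)\|$ bounded for all $r_0$ near $0$'' to ``$\|\mathrm{e}^{tA}\|$ bounded on $[0,+\infty)$''; by contrast, the Jordan-form computation of the entries of $\mathrm{e}^{tJ}$ and the minimal-polynomial versus largest-block translation are entirely routine.
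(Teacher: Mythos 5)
Your proof is correct. Note that the paper does not prove this proposition at all: it is stated as background and cited directly from the textbook reference [Chen], so there is no in-paper argument to compare against. Your route --- reducing the $\varepsilon$--$\delta$ definitions to boundedness (resp.\ decay) of $\|\mathrm{e}^{tA}\|$ via linearity, computing $\mathrm{e}^{tJ}$ blockwise so that each block's norm behaves like $t^{m_\ell-1}\mathrm{e}^{t\,\mathrm{Re}\lambda_\ell}$, and translating ``simple root of the minimal polynomial'' into ``largest Jordan block of size one'' --- is exactly the standard textbook proof, and all the steps you flag as needing care (in particular the two-sided passage between pointwise bounds on solutions and a uniform operator-norm bound) are handled correctly.
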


\begin{prop}[\!\!\cite{Chen}]
Suppose that $A$ and $B$ are two $n\times n$ real matrices, and $A$ is similar to $B$, namely, there exists an $n\times n$ real invertible matrix $P$, such that $A=P^{-1}BP$. For system (\ref{system1}), let $v(t)=Pr(t)$. Then the system after the transformation becomes
\begin{align}\label{system2}
\dot{v}(t)=Bv(t).
\end{align}
System (\ref{system2}) is said to be equivalent to system (\ref{system1}), and $v(t)=Pr(t)$ is called an equivalence transformation.
\end{prop}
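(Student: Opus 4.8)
The plan is to verify the claim by a direct differentiation, establishing that the linear change of variables $v(t)=Pr(t)$ transports solutions of system (\ref{system1}) onto solutions of system (\ref{system2}). First I would differentiate the defining relation $v(t)=Pr(t)$ with respect to $t$. Because $P$ is a fixed (time-independent) invertible matrix realizing the similarity, differentiation commutes with multiplication by $P$, so $\dot v(t)=P\dot r(t)$. Substituting the governing equation $\dot r(t)=Ar(t)$ of system (\ref{system1}) then gives $\dot v(t)=PAr(t)$.

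The next step invokes the similarity hypothesis $A=P^{-1}BP$. Left-multiplying by $P$ yields $PA=PP^{-1}BP=BP$, whence $\dot v(t)=BPr(t)$. Recognizing $Pr(t)=v(t)$ on the right-hand side, I obtain $\dot v(t)=Bv(t)$, which is exactly the claimed system (\ref{system2}). This completes the forward direction of the computation.

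There is no substantive obstacle here: the entire argument is a one-line application of the chain rule for a constant-coefficient linear change of variables. The only points worth flagging are that $P$ must be constant in $t$ for the identity $\dot v(t)=P\dot r(t)$ to hold — which is guaranteed, since $P$ is the fixed matrix exhibiting the similarity $A=P^{-1}BP$ — and that the invertibility of $P$ is what makes $v(t)=Pr(t)$ a genuine bijection between state spaces. I would close by noting that applying the inverse map $r(t)=P^{-1}v(t)$ and the symmetric computation recovers system (\ref{system1}) from system (\ref{system2}); this two-sided correspondence is precisely what justifies calling the two systems \emph{equivalent} and $v(t)=Pr(t)$ an \emph{equivalence transformation}, rather than merely a one-way implication between the two families of solutions.
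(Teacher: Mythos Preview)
Your verification is correct and is exactly the standard one-line computation: $\dot v(t)=P\dot r(t)=PAr(t)=BPr(t)=Bv(t)$. The paper itself offers no proof for this proposition---it is simply stated as a cited result from \cite{Chen} and functions largely as a definition of ``equivalent system'' and ``equivalence transformation''---so there is nothing further to compare against.
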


\begin{prop}[\!\!\cite{Chen}]\label{prop similar}
Let $A$ and $B$ be two $n\times n$ real matrices, and $A$ is similar to $B$. Then the zero solution of the system $\dot{r}(t)=Ar(t)$ is (asymptotically) stable if and only if the zero solution of the system $\dot{v}(t)=Bv(t)$ is (asymptotically) stable.
\end{prop}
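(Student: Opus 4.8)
The plan is to push the $\varepsilon$--$\delta$ definitions directly across the equivalence transformation $v(t)=Pr(t)$, using that $P$ and $P^{-1}$ are bounded invertible linear maps. First I would observe that, by Proposition \ref{ODE} together with the term-by-term identity $\mathrm{e}^{tA}=\mathrm{e}^{t P^{-1}BP}=P^{-1}\mathrm{e}^{tB}P$, the solution of $\dot r(t)=Ar(t)$ with $r(0)=r_0$ satisfies
\[
v(t)=Pr(t)=P\,\mathrm{e}^{tA}r_0=\mathrm{e}^{tB}(Pr_0),
\]
so $v(t)$ is exactly the solution of $\dot v(t)=Bv(t)$ with initial value $v(0)=Pr_0$, and conversely every solution of the $B$-system is obtained from a solution of the $A$-system in this way. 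Thus the two families of trajectories correspond bijectively under $r\leftrightarrow Pr$, and in particular the zero solution of one system corresponds to the zero solution of the other.

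Second, I would introduce the operator norms $m:=\|P\|$ and $M:=\|P^{-1}\|$, which are finite and positive since $P$ is invertible, yielding for every $t\geqslant0$ the two-sided estimate
\[
\frac{1}{m}\,\|v(t)\|\ \leqslant\ \|r(t)\|\ \leqslant\ M\,\|v(t)\|,
\]
and the same inequalities at $t=0$ relating $\|r(0)\|$ and $\|v(0)\|$. These bounds are the whole engine of the proof: they convert a bound on $\|r(t)\|$ into a bound on $\|v(t)\|$ at the cost of a fixed multiplicative constant, and vice versa.

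Third, for the stability statement I would argue as follows, the two implications being symmetric. Assume the zero solution of $\dot r(t)=Ar(t)$ is stable and let $\varepsilon>0$ be given. Applying stability with tolerance $\varepsilon/m$ produces a $\delta_0>0$ with $\|r(0)\|<\delta_0\Rightarrow\|r(t)\|<\varepsilon/m$ for all $t\geqslant0$. Setting $\delta:=\delta_0/M$, if $\|v(0)\|<\delta$ then $\|r(0)\|\leqslant M\|v(0)\|<\delta_0$, hence $\|r(t)\|<\varepsilon/m$ and therefore $\|v(t)\|\leqslant m\|r(t)\|<\varepsilon$ for all $t\geqslant0$; so the zero solution of $\dot v(t)=Bv(t)$ is stable. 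The reverse implication swaps the roles of $m$ and $M$. For asymptotic stability I would add that, once stability is in hand, if $\|v(0)\|$ is small enough to force $\lim\limits_{t\to+\infty}r(t)=0$, then $\|v(t)\|\leqslant m\|r(t)\|\to0$, and conversely; since the admissible smallness thresholds transfer through the constants $m,M$ exactly as above, asymptotic stability of one system is equivalent to that of the other.

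I expect no serious obstacle here: the argument is routine linear algebra plus $\varepsilon$--$\delta$ bookkeeping. The only points needing care are the exponential identity $\mathrm{e}^{tP^{-1}BP}=P^{-1}\mathrm{e}^{tB}P$ (immediate from the power series since $(P^{-1}BP)^k=P^{-1}B^kP$) and keeping the constants $m,M$ on the correct side of each inequality in the $\varepsilon$--$\delta$ chase. Alternatively one could dispatch the whole proposition in one line from Proposition \ref{prop asy.stable}, since similar matrices have the same eigenvalues and the same minimal polynomial, so both spectral criteria are literally identical for $A$ and $B$; I would mention this as a remark but give the transformation-based argument as the main proof, to stay consistent with the equivalence-transformation viewpoint introduced just above.
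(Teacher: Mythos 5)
Your argument is correct: the identity $\mathrm{e}^{tP^{-1}BP}=P^{-1}\mathrm{e}^{tB}P$ gives the bijection between trajectories, the two-sided operator-norm estimate $\tfrac{1}{m}\|v(t)\|\leqslant\|r(t)\|\leqslant M\|v(t)\|$ is stated with the constants on the correct sides, and the $\varepsilon$--$\delta$ chase (and its asymptotic variant) goes through exactly as you describe. Note, however, that the paper does not prove this proposition at all --- it is quoted from the reference [Chen] as a known fact --- so there is no in-paper argument to compare against; either of your two routes (the transformation-based one you develop, or the one-line appeal to Proposition \ref{prop asy.stable} via invariance of the spectrum and minimal polynomial under similarity) would serve as a complete proof.
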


%

\subsection{Curvatures of Curves in $\mathbb{R}^n$}

\begin{defn}[\!\!\cite{doCarmo}]
Let $r:[0, +\infty)\to\mathbb{R}^3$ be a smooth curve. The functions
\begin{align*}
\kappa(t)=\frac{\left\|\dot{r}(t)\times\ddot{r}(t)\right\|}{\left\|\dot{r}(t)\right\|^3}, \quad
\tau(t)=\frac{\left(\dot{r}(t),\ddot{r}(t),\dddot{r}(t)\right)}{\left\|\dot{r}(t)\times\ddot{r}(t)\right\|^2}
\end{align*}
are called the curvature and torsion of the curve $r(t)$, respectively.
\end{defn}

\par
Gluck \cite{Gluck} gave a definition of higher curvatures of curves in $\mathbb{R}^n$,
which is a generalization of curvature and torsion.
Here we omit the definition of higher curvatures and review their calculation formulas directly.

\par
In this paper, $V_i(t)$ denotes the $i$-dimensional volume of the $i$-dimensional parallelotope with vectors
$\dot{r}(t)$, $\ddot{r}(t)$, $\cdots$, $r^{(i)}(t)$
as edges, and we have a convention that $V_0(t)=1$.

\begin{prop}[\!\!\cite{Gluck}]\label{prop curvature}
Let $r:[0, +\infty)\to\mathbb{R}^n$ be a smooth curve, and $\dot r(t)\neq0$ for all $t\in[0, +\infty)$.
Suppose that for each $t\in[0, +\infty)$, the vectors
$\dot r(t), \ddot r(t), \cdots, r^{(m)}(t)\,(m\leqslant n)$
are linearly independent. 
Then the $i$th curvature of a curve $r(t)$ is
\begin{align*}
\kappa_i(t)
=\frac{ V_{i-1}(t) V_{i+1}(t) }{ V_{1}(t) V_{i}^2(t) } \quad
(i=1,2,\cdots,m-1).
\end{align*}
\end{prop}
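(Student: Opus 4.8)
\emph{Proof idea.} The plan is to run the Gram--Schmidt process on the derivative vectors and express both sides of the claimed identity through the resulting data, after which the volume ratio telescopes. Fix $t$, put $f_1=\dot r(t)$, and for $2\le j\le m$ let $f_j$ be the component of $r^{(j)}(t)$ orthogonal to $\mathrm{span}\{\dot r(t),\dots,r^{(j-1)}(t)\}$; the linear-independence hypothesis guarantees $f_j\neq0$, so $e_j=f_j/\|f_j\|$ is well defined, and $\{e_1,\dots,e_m\}$ is exactly the moving frame underlying Gluck's higher curvatures. The first ingredient is the classical volume formula $V_j(t)=\|f_1\|\|f_2\|\cdots\|f_j\|$: expressing each $r^{(k)}$ in the basis $f_1,\dots,f_k$ gives a unipotent upper-triangular change of basis, hence of determinant $1$, so $\det\big(\mathrm{Gram}(\dot r,\dots,r^{(j)})\big)=\det\big(\mathrm{Gram}(f_1,\dots,f_j)\big)=\prod_{k\le j}\|f_k\|^2$, and $V_j$ is the square root of the left-hand side. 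Granting this, the right-hand side of the proposition telescopes to $\dfrac{V_{i-1}V_{i+1}}{V_1V_i^2}=\dfrac{\|f_{i+1}\|}{\|f_1\|\,\|f_i\|}=\dfrac{\|f_{i+1}\|}{\|\dot r\|\,\|f_i\|}$.

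It then remains to show $\kappa_i(t)=\|f_{i+1}\|/(\|\dot r\|\,\|f_i\|)$. Writing the generalized Frenet equations in the $t$-parametrization, $\dot e_i=\|\dot r\|\big(-\kappa_{i-1}e_{i-1}+\kappa_ie_{i+1}\big)$, so $\kappa_i=\langle\dot e_i,e_{i+1}\rangle/\|\dot r\|$. Differentiating $e_i=f_i/\|f_i\|$ and using $\langle f_i,e_{i+1}\rangle=0$ (indeed $e_{i+1}\perp\mathrm{span}\{\dot r,\dots,r^{(i)}\}\ni f_i$) eliminates the term coming from $\|f_i\|'$, leaving $\langle\dot e_i,e_{i+1}\rangle=\langle\dot f_i,e_{i+1}\rangle/\|f_i\|$. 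Now write $f_i=r^{(i)}-p$ with $p\in\mathrm{span}\{\dot r,\dots,r^{(i-1)}\}$; then $\dot f_i=r^{(i+1)}-\dot p$ and $\dot p\in\mathrm{span}\{\dot r,\dots,r^{(i)}\}$, so $\dot p\perp e_{i+1}$ and $\langle\dot f_i,e_{i+1}\rangle=\langle r^{(i+1)},e_{i+1}\rangle=\|f_{i+1}\|$, the last equality because $r^{(i+1)}-f_{i+1}\perp e_{i+1}$ and $f_{i+1}=\|f_{i+1}\|e_{i+1}$. Combining the last displays yields $\kappa_i=\|f_{i+1}\|/(\|\dot r\|\,\|f_i\|)$, as required. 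As a sanity check, for $i=1$ this reduces to $\kappa_1=\|\dot r\times\ddot r\|/\|\dot r\|^3$, and for $n=3,\ i=2$ to $V_3/V_2^2=\big|(\dot r,\ddot r,\dddot r)\big|/\|\dot r\times\ddot r\|^2$, recovering (the modulus of) the torsion of the earlier definition.

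The step I expect to be the real work is making Gluck's definition precise enough to justify the Frenet equations in the normalization used above --- i.e.\ settling whether his $\kappa_i$ are the arc-length curvatures or already carry the speed factor $\|\dot r\|$ --- and then propagating that factor consistently; this is exactly what produces the lone $V_1$ in the denominator (equivalently: $\|\bar f_j\|=\|\dot r\|^{-j}\|f_j\|$ when one passes between the arc-length and $t$ Gram--Schmidt data). Everything else is either the determinant bookkeeping for $V_j$ or the short orthogonality computation above, and the hypotheses $\dot r\neq0$ together with linear independence of $\dot r,\dots,r^{(m)}$ are precisely what make all the $f_j$ ($j\le m$), the frame, and the curvatures $\kappa_1,\dots,\kappa_{m-1}$ simultaneously well defined, so that the displayed formula makes sense for $i=1,\dots,m-1$.
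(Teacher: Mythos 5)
The paper does not prove this proposition at all --- it is imported verbatim from Gluck's 1966 paper, with the remark that the definition of the higher curvatures is omitted --- so there is no in-paper argument to compare against. Your proof is correct and is essentially the standard (indeed Gluck's own) derivation: Gram--Schmidt on $\dot r,\dots,r^{(m)}$, the unipotent change of basis giving $V_j=\prod_{k\le j}\|f_k\|$, and the orthogonality computation giving $\kappa_i=\|f_{i+1}\|/\bigl(\|\dot r\|\,\|f_i\|\bigr)$ all check out, and the telescoping then yields the stated formula. The one point you rightly flag as needing care --- whether Gluck's $\kappa_i$ is the arc-length curvature $\langle e_i'(s),e_{i+1}(s)\rangle$, which is what forces the single factor $V_1$ in the denominator --- is settled affirmatively by his definition, and your observation that the arc-length and $t$-parameter Gram--Schmidt data differ only by the positive triangular rescaling $\bar f_j=\|\dot r\|^{-j}f_j$ closes that loop.
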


\par
In \cite{Gluck}, according to the definition of the curvatures of curves in $\mathbb{R}^n$, we have
$\kappa_i(s)\geqslant0$ for $i=1,2,\cdots,m-1$.

\par
If $r(t)$ is a smooth curve in $\mathbb{R}^3$, and
$\dot r(t), \ddot r(t), \dddot r(t)$ are linearly independent, then we have Frenet-Serret formulas (cf. \cite{doCarmo}), where $\kappa_1(s)=\kappa(s)$, and $\kappa_2(s)=|\tau(s)|$, which means the first and second curvature are the generalization of curvature and torsion of curves in $\mathbb{R}^3$, respectively.
In the remainder of this paper, we use $\kappa(t)$ instead of $\kappa_1(t)$, and $\tau(t)$ instead of $\kappa_2(t)$, for simplicity.

\par
We can give $V_i(t)$ by the derivatives of $r(t)$ with respect to $t$. In fact, we have the following result.

\par
\begin{prop}[\!\!\cite{Wang2}]\label{prop V}
Write $r^{(i)}(t)=\left( r_1^{(i)}(t), r_2^{(i)}(t), \cdots, r_n^{(i)}(t) \right)^{\mathrm{T}}$.
We have
\begin{align*}
V_k^2(t)=
\sum_{1\leqslant i_1<i_2<\cdots <i_k\leqslant n}
\begin{vmatrix}
\dot{r}_{i_1}(t) & \ddot{r}_{i_1}(t) & \cdots & r_{i_1}^{(k)}(t) \\[1em]
\dot{r}_{i_2}(t) & \ddot{r}_{i_2}(t) & \cdots & r_{i_2}^{(k)}(t) \\[1em]
\vdots & \vdots & \ddots & \vdots \\[1em]
\dot{r}_{i_k}(t) & \ddot{r}_{i_k}(t) & \cdots & r_{i_k}^{(k)}(t)
\end{vmatrix}^2.
\end{align*}
\end{prop}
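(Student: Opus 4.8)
The plan is to recognize the right-hand side as an instance of the Cauchy--Binet formula applied to the Gram determinant of the velocity vectors. Write $M(t)$ for the $n\times k$ matrix whose $a$th column is $r^{(a)}(t)$, so that the $(a,b)$ entry of $M(t)^{\mathrm{T}}M(t)$ is the Euclidean inner product $\langle r^{(a)}(t),r^{(b)}(t)\rangle$.

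First I would invoke the standard linear-algebra fact (the Gram determinant identity) that the $k$-dimensional volume of the parallelotope with edges $\dot r(t),\ddot r(t),\ldots,r^{(k)}(t)$ equals the square root of the determinant of their Gram matrix; equivalently, $V_k^2(t)=\det\bigl(M(t)^{\mathrm{T}}M(t)\bigr)$. This is immediate from the way $V_k$ is normalized (it is how Gluck's higher-dimensional volumes are defined); alternatively it follows from a Gram--Schmidt argument, expressing the volume as the product of the norms of the successive orthogonal residuals.

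Next I would apply the Cauchy--Binet formula to the product of the $k\times n$ matrix $M(t)^{\mathrm{T}}$ with the $n\times k$ matrix $M(t)$:
\[
\det\bigl(M(t)^{\mathrm{T}}M(t)\bigr)=\sum_{1\le i_1<\cdots<i_k\le n}\det\bigl(M(t)_{\{i_1,\ldots,i_k\}}\bigr)^2,
\]
where $M(t)_{\{i_1,\ldots,i_k\}}$ is the $k\times k$ submatrix of $M(t)$ consisting of the rows indexed by $i_1<\cdots<i_k$. The two minors appearing in each Cauchy--Binet summand — one from $M(t)^{\mathrm{T}}$, one from $M(t)$ — are transposes of one another, hence equal, which is why the summand is a perfect square. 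Since the $(j,a)$ entry of $M(t)_{\{i_1,\ldots,i_k\}}$ is $r^{(a)}_{i_j}(t)$, this submatrix is exactly the one whose determinant is displayed in the statement, and the identity follows.

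There is no genuine obstacle here: the content is the Gram determinant identity together with Cauchy--Binet, both standard. The only point deserving a line of care is the first step, namely that the notion of $V_k(t)$ used in the paper coincides with $\det\bigl(M(t)^{\mathrm{T}}M(t)\bigr)$; once that is pinned down, the proof is a one-line application of Cauchy--Binet. A self-contained alternative avoiding an explicit appeal to Cauchy--Binet would be induction on $k$: peel off $r^{(k)}(t)$, write $V_k^2(t)$ as $V_{k-1}^2(t)$ times the squared distance from $r^{(k)}(t)$ to the span of $\dot r(t),\ldots,r^{(k-1)}(t)$, and expand the orthogonal projection in coordinates — correct, but messier than the direct argument above.
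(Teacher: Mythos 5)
Your argument is correct and is the standard (and surely intended) derivation: the paper itself gives no proof of this proposition, citing it from \cite{Wang2}, and the identity is exactly the Gram determinant formula $V_k^2(t)=\det\bigl(M(t)^{\mathrm{T}}M(t)\bigr)$ combined with the Cauchy--Binet expansion into squared $k\times k$ minors. Your one caveat — checking that Gluck's $V_k$ is normalized so that the $k$-volume squared equals the Gram determinant — is the only point of substance, and it holds by the Gram--Schmidt/orthogonal-residual description you mention.
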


\par
By Proposition \ref{prop curvature} and \ref{prop V}, we obtain the expression of each curvature of curve $r(t)$ in $\mathbb{R}^n$ by the coordinates of derivatives of $r(t)$.
In particular, if $\dot{r}(t)$ and $\ddot{r}(t)$ are linearly independent, then the torsion of $r(t)$ satisfies

\begin{align}\label{align torsion}
\tau(t)
=\frac{ V_{3}(t) }{ V_{2}^2(t) }
=\frac{
\sqrt{
 \sum_{1\leqslant i<j<k\leqslant n}
\begin{vmatrix}
\dot{r}_{i}(t) & \ddot{r}_{i}(t) & \dddot{r}_{i}(t) \\[1em]
\dot{r}_{j}(t) & \ddot{r}_{j}(t) & \dddot{r}_{j}(t) \\[1em]
\dot{r}_{k}(t) & \ddot{r}_{k}(t) & \dddot{r}_{k}(t)
\end{vmatrix}^2
}
}
{
 \sum_{1\leqslant p<q\leqslant n}
\begin{vmatrix}
\dot{r}_{p}(t) & \ddot{r}_{p}(t) \\[1em]
\dot{r}_{q}(t) & \ddot{r}_{q}(t)
\end{vmatrix}^2
}.
\end{align}
On the other hand, if $V_{2}(t)\equiv 0$, namely $\dot{r}(t)$ and $\ddot{r}(t)$ are linearly dependent for all $t$, then obviously we have the convention that $\tau(t)\equiv 0$. Further, the function $V_{2}(t)$ will be examined in detail in Subsection \ref{subsection V2}.

\subsection{Relationship Between the Curvatures of Two Equivalent Systems}\label{Section Relationship}

\

Wang et al. \cite{Wang2} establish a relationship between the curvatures of the trajectories of two equivalent systems.
In fact, let a curve $r(t)$ be the trajectory of system (\ref{system}),
and suppose that for each $t$, the vectors
$\dot{r}(t), \ddot{r}(t), \cdots, r^{(m)}(t)$
are linearly independent.
Then we can define curvatures $\kappa_{r,1}(t), \kappa_{r,2}(t), \cdots, \kappa_{r,m-1}(t)$ of the curve $r(t)$, and we have the following result.

\begin{prop}[\!\!\cite{Wang2}]\label{prop relation}
Suppose that a linear time-invariant system $\dot{r}(t)=Ar(t)$ is equivalent to a system $\dot{v}(t)=Bv(t)$, where
 $A=P^{-1}BP$, and $v(t)=Pr(t)$ is the equivalence transformation.
 Let $\kappa_{r,i}(t)$ and $\kappa_{v,i}(t)$ be the $i$th $(i=1,2,\cdots,m-1)$ curvatures of trajectories $r(t)$ and $v(t)$, respectively. Then we have
\begin{align*}
\lim\limits_{t\to+\infty}\kappa_{r,i}(t)=0 &\iff \lim\limits_{t\to+\infty}\kappa_{v,i}(t)=0, \\
\lim\limits_{t\to+\infty}\kappa_{r,i}(t)=+\infty &\iff \lim\limits_{t\to+\infty}\kappa_{v,i}(t)=+\infty, \\
\kappa_{r,i}(t) ~\text{is a bounded function} &\iff \kappa_{v,i}(t) ~\text{is a bounded function}.
\end{align*}
\end{prop}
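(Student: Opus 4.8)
The plan is to use that the equivalence transformation $v(t)=Pr(t)$ is a fixed linear isomorphism, so that differentiating gives $v^{(j)}(t)=Pr^{(j)}(t)$ for every $j\ge 1$ and every $t$; in particular, since $P$ is invertible, the vectors $\dot v(t),\ddot v(t),\dots,v^{(m)}(t)$ are linearly independent exactly when $\dot r(t),\ddot r(t),\dots,r^{(m)}(t)$ are, so the curvatures $\kappa_{v,i}(t)$ are defined on the same set as the $\kappa_{r,i}(t)$. By Proposition \ref{prop curvature} it therefore suffices to compare, for $1\le k\le m$, the volume $V_{v,k}(t)$ of the parallelotope spanned by $\dot v(t),\dots,v^{(k)}(t)$ with the volume $V_{r,k}(t)$ spanned by $\dot r(t),\dots,r^{(k)}(t)$.

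The core step is the linear-algebraic estimate that there exist constants $0<c_k\le C_k$, depending only on $P$, such that $c_k\,V_{r,k}(t)\le V_{v,k}(t)\le C_k\,V_{r,k}(t)$ for all $t$. (The naive guess $V_{v,k}\equiv|\det P|\,V_{r,k}$ is false once $k<n$, which is exactly why some work is needed.) To prove it, take a singular value decomposition $P=U\Sigma Q^{\mathrm T}$ with $U,Q$ orthogonal and $\Sigma=\mathrm{diag}(\sigma_1,\dots,\sigma_n)$, $\sigma_1\ge\cdots\ge\sigma_n>0$. Orthogonal transformations preserve $k$-dimensional volumes, so $V_{r,k}(t)$ equals the $k$-volume of the parallelotope spanned by $w_1(t),\dots,w_k(t)$ and $V_{v,k}(t)$ equals the $k$-volume spanned by $\Sigma w_1(t),\dots,\Sigma w_k(t)$, where $w_j(t)=Q^{\mathrm T}r^{(j)}(t)$. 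Letting $M(t)$ be the $n\times k$ matrix with columns $w_1(t),\dots,w_k(t)$, the Cauchy--Binet formula gives $V_{v,k}(t)^2=\det\!\big(M(t)^{\mathrm T}\Sigma^{2}M(t)\big)=\sum_{|S|=k}\big(\prod_{i\in S}\sigma_i^2\big)\det(M(t)_S)^2$ and $V_{r,k}(t)^2=\det\!\big(M(t)^{\mathrm T}M(t)\big)=\sum_{|S|=k}\det(M(t)_S)^2$, where $S$ runs over $k$-element subsets of $\{1,\dots,n\}$ and $M(t)_S$ is the corresponding $k\times k$ submatrix. Since $\sigma_n^{2k}\le\prod_{i\in S}\sigma_i^2\le\sigma_1^{2k}$ for every such $S$, we obtain $\sigma_n^{k}\,V_{r,k}(t)\le V_{v,k}(t)\le\sigma_1^{k}\,V_{r,k}(t)$.

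Substituting these bounds into $\kappa_{v,i}(t)=V_{v,i-1}(t)\,V_{v,i+1}(t)\big/\big(V_{v,1}(t)\,V_{v,i}(t)^2\big)$ and recalling the convention $V_{r,0}=V_{v,0}=1$, the powers of $\sigma_1$ and $\sigma_n$ collapse into fixed positive constants: one gets $\gamma_i\,\kappa_{r,i}(t)\le\kappa_{v,i}(t)\le\Gamma_i\,\kappa_{r,i}(t)$ for all $t$, with $\gamma_i=\sigma_n^{2i}/\sigma_1^{2i+1}$ and $\Gamma_i=\sigma_1^{2i}/\sigma_n^{2i+1}$. A two-sided comparison by $t$-independent positive constants immediately yields all three equivalences in the statement: $\lim_{t\to+\infty}\kappa_{r,i}(t)=0\iff\lim_{t\to+\infty}\kappa_{v,i}(t)=0$, the analogous statement with limit $+\infty$, and boundedness of $\kappa_{r,i}$ iff boundedness of $\kappa_{v,i}$.

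I expect the only real obstacle to be the uniform volume-comparison lemma: for $k<n$ the transformed $k$-volume genuinely depends on the position of the $k$-plane spanned by the derivative vectors and not merely on $\det P$, so it is the Cauchy--Binet computation above that makes the bounding constants independent of $t$. Once that is in place, the remaining exponent bookkeeping and the deduction of the limit and boundedness equivalences are routine.
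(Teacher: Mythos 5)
Your argument is correct and complete. Note first that the paper does not actually prove this proposition: it is imported verbatim from the cited reference [Wang2], so there is no in-paper proof to compare against. Your proof is a legitimate self-contained derivation, and its key step --- the uniform two-sided volume comparison $\sigma_n^{k}\,V_{r,k}(t)\leqslant V_{v,k}(t)\leqslant\sigma_1^{k}\,V_{r,k}(t)$ obtained from the singular value decomposition of $P$, orthogonal invariance of $k$-volumes, and Cauchy--Binet --- is exactly the right lemma; it is equivalent to the standard bound $\|P^{-1}\|^{-k}V_{r,k}\leqslant V_{v,k}\leqslant\|P\|^{k}V_{r,k}$ coming from the induced map on the $k$th exterior power. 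Your observation that the naive identity $V_{v,k}=|\det P|\,V_{r,k}$ fails for $k<n$ correctly identifies why some argument is needed, the bookkeeping giving $\gamma_i\,\kappa_{r,i}(t)\leqslant\kappa_{v,i}(t)\leqslant\Gamma_i\,\kappa_{r,i}(t)$ checks out (including the $i=1$ case via the convention $V_0\equiv1$), and since the curvatures are nonnegative this two-sided comparison by $t$-independent constants immediately yields all three stated equivalences. The only hypothesis you should make sure to carry along is the one already implicit in the proposition, namely that $\dot r(t),\dots,r^{(m)}(t)$ are linearly independent so that both families of curvatures are defined; you address this correctly by noting that invertibility of $P$ transfers linear independence.
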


\subsection{Real Jordan Canonical Form}

\begin{prop}[\!\!\cite{Marsden,Horn}]\label{prop Jordan1}
Let $A$ be an $n\times n$ real matrix. Then $A$ is similar to a block diagonal real matrix
\begin{align}\label{real Jordan matrix}
\begin{pmatrix}
 \begin{matrix}
  C_{n_1}(a_1,b_1)&&\\[0.7ex]
  &C_{n_2}(a_2,b_2)&\\
  &&\ddots
 \end{matrix}
 &&\text{\LARGE$0$}\\
 &C_{n_p}(a_p,b_p)&\\[0.7ex]
 \text{\LARGE$0$}&&
 \begin{matrix}
  J_{n_{p+1}}(\lambda_{p+1})&&\\
  &\ddots&\\
  &&J_{n_{r}}(\lambda_r)
 \end{matrix}
\end{pmatrix},
\end{align}
where
\par
$(1)$ for $k\in\{1,2,\cdots,p\}$, the numbers $\lambda_k=a_k+\sqrt{-1} b_k$ and $\bar\lambda_k=a_k-\sqrt{-1} b_k$
\ $(a_k,b_k\in\mathbb{R},\text{and}\ b_k>0)$ are complex eigenvalues of $A$, and
\begin{align*}
C_{n_k}(a_k,b_k)=
\begin{pmatrix}
\Lambda_k & I_2&&&\\[0.6em]
&\Lambda_k & I_2&&\\
&&\Lambda_k &\ddots&\\
&&&\ddots & I_2\\[0.5em]
&&&&\Lambda_k
\end{pmatrix}_{2n_k\times 2n_k},
\end{align*}
where
$\Lambda_k=
\begin{pmatrix}
 a_k&b_k\\
-b_k&a_k
\end{pmatrix},
I_2=
\begin{pmatrix}
1&0\\
0&1
\end{pmatrix};$
\par
$(2)$ for $j\in\{p+1,p+2,\cdots,r\}$, the number $\lambda_j$ is a real eigenvalue of $A$, and
\begin{align*}
J_{n_j}(\lambda_j)=
\begin{pmatrix}
\lambda_j &1&&&\\[0.6em]
&\lambda_j &1&&\\
&&\lambda_j &\ddots&\\
&&&\ddots &1\\[0.5em]
&&&&\lambda_j
\end{pmatrix}_{n_j\times n_j}.
\end{align*}
The matrix (\ref{real Jordan matrix}) is called the real Jordan canonical form of $A$.
\end{prop}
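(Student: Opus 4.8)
The plan is to derive the real form from the Jordan canonical form over $\mathbb{C}$ (equivalently, from the structure theorem for modules over the principal ideal domain $\mathbb{R}[x]$), which I treat as known. Viewing $A$ as a linear operator on $\mathbb{C}^n$, the complex Jordan theorem decomposes $\mathbb{C}^n$ into a direct sum of $A$-invariant cyclic subspaces, each carrying a Jordan chain $v_1,\dots,v_m$ attached to some eigenvalue $\mu$; that is, $(A-\mu I)v_1=0$ and $(A-\mu I)v_{k+1}=v_k$ for $1\le k\le m-1$. The first observation is a conjugation symmetry: because $A$ has real entries, applying entrywise complex conjugation to a Jordan chain for $\mu$ yields a Jordan chain $\bar v_1,\dots,\bar v_m$ for $\bar\mu$ of the same length. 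Hence the non-real eigenvalues, together with the lengths and multiplicities of their chains, occur in conjugate pairs $\{\mu,\bar\mu\}$, while the chains attached to a real eigenvalue $\lambda$ may be chosen with real entries, since the generalized eigenspace $\ker(A-\lambda I)^n$ of a real eigenvalue is the kernel of a real matrix and hence is defined over $\mathbb{R}$.

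Next I would handle the two kinds of blocks. A real Jordan chain $u_1,\dots,u_{n_j}$ for a real eigenvalue $\lambda_j$ gives, in the ordered real basis $u_1,\dots,u_{n_j}$, exactly the block $J_{n_j}(\lambda_j)$. For a conjugate pair $\{\lambda_k,\bar\lambda_k\}$ with $\lambda_k=a_k+\sqrt{-1}\,b_k$ and $b_k>0$, fix a Jordan chain $v_1,\dots,v_{n_k}$ for $\lambda_k$ and set $x_\ell=\mathrm{Re}\,v_\ell$ and $y_\ell=\mathrm{Im}\,v_\ell$. Splitting the chain relations into real and imaginary parts yields $Ax_1=a_kx_1-b_ky_1$, $Ay_1=b_kx_1+a_ky_1$, and, for $\ell\ge 1$, $Ax_{\ell+1}=a_kx_{\ell+1}-b_ky_{\ell+1}+x_\ell$ and $Ay_{\ell+1}=b_kx_{\ell+1}+a_ky_{\ell+1}+y_\ell$. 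Reading off the matrix of $A$ on the real linear span of $x_1,y_1,\dots,x_{n_k},y_{n_k}$ with respect to the ordered basis $x_1,y_1,x_2,y_2,\dots,x_{n_k},y_{n_k}$ produces precisely $C_{n_k}(a_k,b_k)$: the diagonal blocks $\Lambda_k$ come from the first pair of relations, and the superdiagonal $I_2$ blocks come from the trailing terms $x_\ell$ and $y_\ell$ in the second pair.

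Finally I would concatenate all of these real bases — taking those from the conjugate pairs first and those from the real eigenvalues afterwards — into a single ordered basis of $\mathbb{R}^n$, and let $P$ be the invertible real matrix whose columns are these vectors; then $P^{-1}AP$ is the block diagonal matrix (\ref{real Jordan matrix}), the order of the blocks being only a cosmetic choice of basis ordering. The step requiring the most care is precisely this pairing-and-independence bookkeeping: one must verify that $\bar v_1,\dots,\bar v_m$ is genuinely a Jordan chain for $\bar\mu$, that the union over all eigenvalues of the complex Jordan chains remains a $\mathbb{C}$-basis of $\mathbb{C}^n$, and that for each conjugate pair the $2n_k$ real vectors $x_\ell,y_\ell$ are linearly independent over $\mathbb{R}$ and span an $A$-invariant real subspace — the independence following because the real linear span of $\{x_\ell,y_\ell\}$ has the complex span of $\{v_1,\dots,v_{n_k},\bar v_1,\dots,\bar v_{n_k}\}$ as its complexification, which has complex dimension $2n_k$. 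Once this is in place, matching each block with $C_{n_k}(a_k,b_k)$ or $J_{n_j}(\lambda_j)$ is the routine substitution indicated above.
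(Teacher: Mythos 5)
The paper offers no proof of this proposition: it is quoted as a standard fact from the cited references (Horn--Johnson; Marsden et al.), and your argument is precisely the standard derivation found there. Your computation is correct: with the chain convention $Av_1=\mu v_1$, $Av_{\ell+1}=\mu v_{\ell+1}+v_\ell$, the ordered basis $x_1,y_1,\dots,x_{n_k},y_{n_k}$ does yield $\Lambda_k$ on the diagonal blocks and $I_2$ on the superdiagonal blocks, and your justification of real linear independence via the complexified span of $\{v_\ell,\bar v_\ell\}$ is exactly the right bookkeeping. Nothing further is needed.
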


\section{Real Diagonal Matrix}\label{Section Diagonal}

In this section, we study the case that the system matrix is similar to a real diagonal matrix, and prove Theorem \ref{thm main 1}. From Proposition \ref{prop similar} and Proposition \ref{prop relation}, we only need to focus on the case that $A$ is a real diagonal matrix, and prove Proposition \ref{prop diagonal}.

In what follows, 
we defind a subset of $\mathbb{R}^n$ that
\begin{align*}
S=\left\{ r(0) \Bigg| r(0)=(r_{1}(0),r_{2}(0),\cdots,r_{n}(0))^\mathrm{T}\in\mathbb{R}^n, \mathrm{s.t.}\ \prod_{i=1}^n {r_{i}(0)}\neq0 \right\}.
\end{align*}

\begin{prop} \label{prop diagonal}
Suppose that $\dot{r}(t)=Ar(t)$ is a linear time-invariant system, where $A$ is an $n\times n$ real diagonal matrix, and $r(t)\in\mathbb{R}^n$. Denote by $\kappa_i(t)\,(i=1,2,\cdots)$ the $i$th curvature of trajectory of a solution $r(t)$. Then for any given initial value $r(0)\in S$, 
we have
\par
$(1)$ if $\lim\limits_{t\to+\infty}\kappa_i(t)\neq0$ or $\lim\limits_{t\to+\infty}\kappa_i(t)$ does not exist, then the zero solution of the system is stable;
\par
$(2)$ if $A$ is invertible, and $\lim\limits_{t\to+\infty}\kappa_i(t)\neq0$ or $\lim\limits_{t\to+\infty}\kappa_i(t)$ does not exist, then the zero solution of the system is asymptotically stable.
\end{prop}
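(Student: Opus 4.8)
The plan is to prove both parts by contraposition, via an explicit asymptotic analysis of the curvatures of the trajectory. Since $A=\mathrm{diag}(\lambda_1,\dots,\lambda_n)$, Proposition~\ref{ODE} gives $r_j(t)=c_j\mathrm{e}^{\lambda_j t}$ for the solution with $r(0)=(c_1,\dots,c_n)^{\mathrm{T}}\in S$, whence $r_j^{(k)}(t)=c_j\lambda_j^{\,k}\mathrm{e}^{\lambda_j t}$ and every $c_j\ne0$ by the definition of $S$. Substituting this into Proposition~\ref{prop V}, in the $k\times k$ minor indexed by $i_1<\dots<i_k$ one factors $c_{i_a}\lambda_{i_a}\mathrm{e}^{\lambda_{i_a}t}$ out of the $a$th row and is left with a Vandermonde determinant in $\lambda_{i_1},\dots,\lambda_{i_k}$, so that
\begin{align*}
V_k^2(t)=\sum_{1\le i_1<\dots<i_k\le n}\Bigl(\prod_{a=1}^k c_{i_a}^2\lambda_{i_a}^2\,\mathrm{e}^{2\lambda_{i_a}t}\Bigr)\prod_{1\le a<b\le k}(\lambda_{i_b}-\lambda_{i_a})^2 .
\end{align*}
A summand here is strictly positive when the indices $i_1,\dots,i_k$ carry pairwise distinct nonzero eigenvalues, and vanishes otherwise; in particular, whenever $V_k\not\equiv0$ one has $V_k(t)>0$ for every $t$, so the hypotheses of Proposition~\ref{prop curvature} hold at every $t$ and no denominator in the curvature formulas vanishes.

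Next I extract the dominant exponential. Let $\nu_1>\nu_2>\dots>\nu_d$ be the distinct nonzero eigenvalues of $A$. For $k\le d$ the largest exponent $2\bigl(\textstyle\sum_a\lambda_{i_a}\bigr)t$ occurring among the surviving summands of $V_k^2(t)$ is attained by choosing the $k$ largest values $\nu_1,\dots,\nu_k$, so
\begin{align*}
V_k(t)=\sqrt{Q_k}\,\mathrm{e}^{\mu_k t}\bigl(1+o(1)\bigr)\quad(t\to+\infty),\qquad \mu_k=\nu_1+\dots+\nu_k,\ \ Q_k>0,
\end{align*}
while $V_k\equiv0$ for $k>d$; with the convention $V_0\equiv1$ this also covers $k=0$ (put $\mu_0=0$). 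Hence $\kappa_i(t)=\frac{V_{i-1}(t)V_{i+1}(t)}{V_1(t)V_i^2(t)}$ is defined for all $t$ exactly when $1\le i\le d-1$, and inserting the expansions gives $\kappa_i(t)=C_i\,\mathrm{e}^{\gamma_i t}\bigl(1+o(1)\bigr)$ with $C_i>0$ and $\gamma_i=\mu_{i-1}+\mu_{i+1}-\mu_1-2\mu_i$; a one-line telescoping shows $\gamma_i=\nu_{i+1}-\nu_i-\nu_1$. Consequently $\lim_{t\to+\infty}\kappa_i(t)$ always exists in $[0,+\infty]$, and it equals $0$ precisely when $\gamma_i<0$. (When $i\ge d$ one takes the convention $\kappa_i\equiv0$, so the hypothesis of $(1)$ cannot hold and nothing is claimed there.)

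To finish part $(1)$, suppose the zero solution is not stable. Because $A$ is real and diagonal, its minimal polynomial has only simple roots, so by Proposition~\ref{prop asy.stable} instability forces some $\lambda_j>0$, hence $\nu_1>0$. Since $\nu_{i+1}<\nu_i$, we get $\gamma_i=\nu_{i+1}-\nu_i-\nu_1<-\nu_1<0$, so $\lim_{t\to+\infty}\kappa_i(t)=0$ for every admissible $i$ and every $r(0)\in S$, contradicting the hypothesis of $(1)$. Therefore the zero solution is stable. For part $(2)$, if moreover $A$ is invertible then no $\lambda_j$ vanishes; combined with the stability just proved (which, by Proposition~\ref{prop asy.stable}, means all $\lambda_j\le0$) this gives $\lambda_j<0$ for every $j$, and Proposition~\ref{prop asy.stable} then yields asymptotic stability.

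The step that carries the argument is the Vandermonde reduction together with the bookkeeping of which minors survive: one must verify that the surviving leading coefficients $Q_k$ are strictly positive — so that the ratio defining $\kappa_i$ has a genuine limit in $[0,+\infty]$ with no cancellation — and that the index restrictions ($r(0)\in S$, pairwise distinct eigenvalues in each minor, $1\le i\le d-1$) are exactly those under which the Gluck formula of Proposition~\ref{prop curvature} applies for all $t$. Everything after that is the short sign computation above, and part $(2)$ is then essentially immediate.
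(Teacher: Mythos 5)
Your proposal is correct and follows essentially the same route as the paper: the Vandermonde factorization of $V_k^2(t)$, identification of the dominant exponent via the sorted distinct nonzero eigenvalues, the sign computation $\gamma_i=\nu_{i+1}-\nu_i-\nu_1$ (the paper's $\Delta_1-\Delta_2=2(\lambda_{(i+1)}-\lambda_{(1)}-\lambda_{(i)})$), and contraposition using Proposition~\ref{prop asy.stable}. Your treatment is if anything slightly more careful than the paper's on the bookkeeping of which minors survive and the positivity of the leading coefficients.
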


Wang et al. \cite{Wang2} has proved the case of $i=1$. Now we give a complete proof of this proposition.

\begin{proof}
$(1)$ Suppose that $A$ is an $n\times n$ real diagonal matrix, namely,
\begin{align*}
A=\mathrm{diag}\{ \lambda_1, \lambda_2, \cdots, \lambda_n \}.
\end{align*}
Then
\begin{align*}
A^k=\mathrm{diag}\{\lambda_1^k, \lambda_2^k, \cdots, \lambda_n^k \}, \quad
\mathrm{e}^{tA}=
\mathrm{diag}\{\mathrm{e}^{\lambda_1 t}, \mathrm{e}^{\lambda_2 t}, \cdots, \mathrm{e}^{\lambda_n t} \},
\end{align*}
where $p=1,2,\cdots$.
Hence we have
\begin{align*}
& r(t)=\mathrm{e}^{tA}r(0)
=\left( \mathrm{e}^{\lambda_1 t}r_{1}(0), \mathrm{e}^{\lambda_2 t}r_{2}(0), \cdots, \mathrm{e}^{\lambda_n t}r_{n}(0) \right)^\mathrm{T},
\\& \dot{r}(t)=Ar(t)
=\left( \lambda_1 \mathrm{e}^{\lambda_1 t}r_{1}(0), \lambda_2 \mathrm{e}^{\lambda_2 t}r_{2}(0), \cdots, \lambda_n \mathrm{e}^{\lambda_n t}r_{n}(0) \right)^\mathrm{T},
\\& \cdots\cdots,
\\& r^{(k)}(t)=A^k r(t)
=\left( \lambda_1^k \mathrm{e}^{\lambda_1 t}r_{1}(0), \lambda_2^k \mathrm{e}^{\lambda_2 t}r_{2}(0), \cdots, \lambda_n^k \mathrm{e}^{\lambda_n t}r_{n}(0) \right)^\mathrm{T},
\end{align*}
namely, the coordinates of derivatives of $r(t)$ are
\begin{align*}
\dot{r}_i(t)=\lambda_i \mathrm{e}^{\lambda_i t}r_{i}(0), \quad
\cdots\cdots, \quad
r^{(k)}_i(t)=\lambda_i^k \mathrm{e}^{\lambda_i t}r_{i}(0) \quad
(i=1,2,\cdots,n).
\end{align*}
Then by Proposition \ref{prop V}, we obtain
\begin{align}\label{Vk^2}
V_k^2(t)&=
\sum_{1\leqslant i_1<i_2<\cdots <i_k\leqslant n}
\begin{vmatrix}
\dot{r}_{i_1}(t) & \ddot{r}_{i_1}(t) & \cdots & r_{i_1}^{(k)}(t) \\[1em]
\dot{r}_{i_2}(t) & \ddot{r}_{i_2}(t) & \cdots & r_{i_2}^{(k)}(t) \\[1em]
\vdots & \vdots & \ddots & \vdots \\[1em]
\dot{r}_{i_k}(t) & \ddot{r}_{i_k}(t) & \cdots & r_{i_k}^{(k)}(t)
\end{vmatrix}^2
\nonumber
\displaybreak[0]
\\&=
\sum_{1\leqslant i_1<i_2<\cdots <i_k\leqslant n}
\begin{vmatrix}
\lambda_{i_1} \mathrm{e}^{\lambda_{i_1} t}r_{i_1}(0)
& \lambda_{i_1}^2 \mathrm{e}^{\lambda_{i_1} t}r_{i_1}(0)
& \cdots
& \lambda_{i_1}^k \mathrm{e}^{\lambda_{i_1} t}r_{i_1}(0)
\\[1em]
\lambda_{i_2} \mathrm{e}^{\lambda_{i_2} t}r_{i_2}(0)
& \lambda_{i_2}^2 \mathrm{e}^{\lambda_{i_2} t}r_{i_2}(0)
& \cdots
& \lambda_{i_2}^k \mathrm{e}^{\lambda_{i_2} t}r_{i_2}(0)
\\[1em]
\vdots & \vdots & \ddots & \vdots \\[1em]
\lambda_{i_k} \mathrm{e}^{\lambda_{i_k} t}r_{i_k}(0)
& \lambda_{i_k}^2 \mathrm{e}^{\lambda_{i_k} t}r_{i_k}(0)
& \cdots
& \lambda_{i_k}^k \mathrm{e}^{\lambda_{i_k} t}r_{i_k}(0)
\end{vmatrix}
^2
\nonumber
\displaybreak[0]
\\&=
\sum_{1\leqslant i_1<i_2<\cdots <i_k\leqslant n}
\left\{
\mathrm{e}^{\left(\sum_{p=1}^k \lambda_{i_p}\right) t}
\prod_{q=1}^k\left(\lambda_{i_q} r_{i_q}(0)\right)
\begin{vmatrix}
1
& \lambda_{i_1}
& \lambda_{i_1}^2
& \cdots
& \lambda_{i_1}^{k-1}
\\[0.7em]
1
& \lambda_{i_2}
& \lambda_{i_2}^2
& \cdots
& \lambda_{i_2}^{k-1}
\\[0.7em]
\vdots & \vdots & \vdots & \ddots & \vdots \\[0.7em]
1
& \lambda_{i_k}
& \lambda_{i_k}^2
& \cdots
& \lambda_{i_k}^{k-1}
\end{vmatrix}
\right\}^2
\nonumber
\displaybreak[0]
\\&=
\sum_{1\leqslant i_1<i_2<\cdots <i_k\leqslant n}
\mathrm{e}^{2\left(\sum_{p=1}^k \lambda_{i_p}\right) t}
\left\{
\prod_{q=1}^k\left(\lambda_{i_q} r_{i_q}(0)\right)
\prod_{1\leqslant\alpha<\beta\leqslant k}\left(\lambda_{i_\beta}-\lambda_{i_\alpha}\right)
\right\}^2.
\end{align}
We see that if the eigenvalues $\lambda_{i_1}, \lambda_{i_2}, \cdots, \lambda_{i_k}$ of $A$ are non-zero and distinct, then a term of the form $C\mathrm{e}^{2\left(\sum_{p=1}^k \lambda_{i_p}\right) t}$ will appear in the expression of $V_k^2(t)$, where $C$ is a constant depending on the eigenvalues and initial value, and $C>0$.

\par
By Proposition \ref{prop curvature}, the square of the $i$th curvature is
\begin{align}\label{align ki^2}
\kappa_i^2(t)
=\frac{ V_{i-1}^2(t) V_{i+1}^2(t) }{ V_{1}^2(t) V_{i}^4(t) } \quad
(i=1,2,\cdots,m-1).
\end{align}
Now, we consider the limit of $\kappa_i(t)$ as $t\to+\infty$ by comparing the exponents of $\mathrm{e}$ in the numerator and denominator of $\kappa_i^2(t)$.
Let $\Delta_1$ and $\Delta_2$ denote the maximum values of $\alpha$ in the terms of the form $\mathrm{e}^{\alpha t}$ in $ V_{i-1}^2(t) V_{i+1}^2(t)$ and $V_{1}^2(t) V_{i}^4(t)$, respectively.
We define
\begin{align*}
&\lambda_{(1)}=\max\left\{ \sigma(A) \backslash \left\{ 0 \right\} \right\},
\\&\lambda_{(2)}=\max\left\{ \sigma(A) \backslash \left\{ 0, \lambda_{(1)} \right\} \right\},
\\&\cdots\cdots,
\\&\lambda_{(i)}=\max\left\{ \sigma(A) \backslash \left\{ 0, \lambda_{(1)}, \lambda_{(2)}, \cdots, \lambda_{(i-1)} \right\} \right\},
\\&\cdots\cdots
\end{align*}
Then by (\ref{Vk^2}) and (\ref{align ki^2}), we have
\begin{align*}
\Delta_1=2\sum_{a=1}^{i-1}\lambda_{(a)}+2\sum_{b=1}^{i+1}\lambda_{(b)},
\quad \Delta_2=2\lambda_{(1)}+4\sum_{c=1}^{i}\lambda_{(c)}.
\end{align*}
Thus,
\begin{align*}
\Delta_1-\Delta_2=2\left(\lambda_{(i+1)}-\lambda_{(1)}-\lambda_{(i)}\right).
\end{align*}
It follows that
\begin{align}\label{align limit ki}
\lim\limits_{t\to+\infty}\kappa_i(t)=0 \iff \Delta_1<\Delta_2 \iff \lambda_{(1)}+\lambda_{(i)}>\lambda_{(i+1)}, \nonumber\\
\lim\limits_{t\to+\infty}\kappa_i(t)=C \iff \Delta_1=\Delta_2 \iff \lambda_{(1)}+\lambda_{(i)}=\lambda_{(i+1)}, \nonumber\\
\lim\limits_{t\to+\infty}\kappa_i(t)=+\infty \iff \Delta_1>\Delta_2 \iff \lambda_{(1)}+\lambda_{(i)}<\lambda_{(i+1)},
\end{align}
where $C$ is a positive constant depending on the initial value $r(0)=r_0 ~(r_{j}(0)\neq0$ for $j=1,2,\cdots,n)$.
Here we notice that for any given real diagonal matrix $A$,
if for a given initial value $r(0)\in\mathbb{R}^n$ that satisfies $\prod_{j=1}^n {r_{j}(0)}\neq0$,
we have $\lim\limits_{t\to+\infty}\kappa_i(t)=0$ (or $+\infty$, or a constant $C>0$, respectively),
then for an arbitrary $r(0)\in\mathbb{R}^n$ satisfying $\prod_{j=1}^n {r_{j}(0)}\neq0$,
we still have $\lim\limits_{t\to+\infty}\kappa_i(t)=0$ (or $+\infty$, or a constant $\tilde{C}>0$, respectively).

\par
Noting that $A$ is a real diagonal matrix, by Proposition \ref{prop asy.stable}, the zero solution of the system (\ref{system1}) is stable if and only if
$\lambda_i(A)\leqslant 0 \, (i=1,2,\cdots,n)$.
If the zero solution of the system is unstable, then we have $\lambda_{(1)}>0$, thus $\lambda_{(1)}+\lambda_{(i)}>\lambda_{(i+1)}$.
By (\ref{align limit ki}), we have $\lim\limits_{t\to+\infty}\kappa_i(t)=0$.
In other words, if
$\lim\limits_{t\to+\infty}\kappa_i(t)\neq0$ or $\lim\limits_{t\to+\infty}\kappa_i(t)$ does not exist,
then the zero solution of the system is stable.
\par
$(2)$ Suppose that $A$ is invertible, and $\lim\limits_{t\to+\infty}\kappa_i(t)\neq0$ or $\lim\limits_{t\to+\infty}\kappa_i(t)$ does not exist. Then $0$ is not a eigenvalue of $A$, and the zero solution of the system is stable.
By Proposition \ref{prop asy.stable}, the zero solution of the system is asymptotically stable.
\end{proof}
\par
Now, we proceed to the proof of Theorem \ref{thm main 1}.
\begin{proof}[Proof of Theorem \ref{thm main 1}]
Suppose that the linear time-invariant system $\dot{r}(t)=Ar(t)$ is equivalent to a system $\dot{v}(t)=Bv(t)$, where $B$ is a real diagonal matrix, $A=P^{-1}BP$, and $v(t)=Pr(t)$ is the equivalence transformation.
They by Proposition \ref{prop relation}, we have
\begin{align}\label{align 1}
\lim\limits_{t\to+\infty}\kappa_{r,i}(t)=0 \iff \lim\limits_{t\to+\infty}\kappa_{v,i}(t)=0.
\end{align}
\par
We define
\begin{align*}
\tilde S=\left\{ P^{-1}v(0) \Bigg| v(0)=(v_{1}(0),v_{2}(0),\cdots,v_{n}(0))^\mathrm{T}\in\mathbb{R}^n, \mathrm{s.t.}\ \prod_{i=1}^n {v_{i}(0)}\neq0 \right\}.
\end{align*}
Note that we can regard any given $n\times n$ invertible matrix $P$ as an invertible linear transformation $P: \mathbb{R}^n \to \mathbb{R}^n$, and the Lebesgue measure of $\mathbb{R}^n \backslash \tilde S$ satisfies
\begin{align}\label{align 2}
m\left( \mathbb{R}^n \backslash \tilde S \right)=0.
\end{align}
\par
If there exists a measurable set $E\subseteq \mathbb{R}^n$ whose Lebesgue measure is greater than $0$, such that $r(0)\in E$ implies that $\lim\limits_{t\to+\infty}\kappa_{r,i}(t)\neq0$ or $\lim\limits_{t\to+\infty}\kappa_{r,i}(t)$ does not exist, then by (\ref{align 1}) and (\ref{align 2}), there exists a $r(0)\in \tilde S$, such that the trajectory $v(t)$ with initial value $v(0)=Pr(0)$ satisfies $\lim\limits_{t\to+\infty}\kappa_{v,i}(t)\neq0$ or $\lim\limits_{t\to+\infty}\kappa_{v,i}(t)$ does not exist. Notice that when $r(0)\in \tilde S$, the vector $v(0)$ satisfies $\prod_{i=1}^n {v_{i}(0)}\neq0$, thus by Proposition \ref{prop diagonal}, the zero solution of the system $\dot{v}(t)=Bv(t)$ is stable, and then by Proposition \ref{prop similar}, the zero solution of the system $\dot{r}(t)=Ar(t)$ is also stable, which proves Theorem \ref{thm main 1} (1).
\par
Since $A$ is similar to $B$, the matrix $A$ is invertible if and only if $B$ is invertible.
The method of the proof of (1) works for (2), which completes the proof of Theorem \ref{thm main 1}.
\end{proof}

\section{Relationship Between Torsion and Stability}\label{Section Torsion}

In this section, we give the proof of Theorem \ref{thm main 2}, which establish a relationship between the torsion of the trajectory and the stability of the zero solution of the system. From Proposition \ref{prop similar}, Proposition \ref{prop relation}, and Proposition \ref{prop Jordan1},
we only need to focus on the case that $A$ is an invertible matrix in real Jordan canonical form (\ref{real Jordan matrix}), and prove the following result.

\begin{prop} \label{prop torsion Jordan}
Suppose that $\dot{r}(t)=Ar(t)$ is a linear time-invariant system, where $A$ is an $n\times n$ invertible matrix in real Jordan canonical form, and $r(t)\in\mathbb{R}^n$. Denote by $\tau(t)$ the torsion of trajectory of a solution $r(t)$. Then for any given initial value $r(0)\in S$,
we have
\par
$(1)$ if $\lim\limits_{t\to+\infty}\tau(t)\neq0$ or $\lim\limits_{t\to+\infty}\tau(t)$ does not exist, then the zero solution of the system is stable;
\par
$(2)$ if $\lim\limits_{t\to+\infty}\tau(t)=+\infty$, then the zero solution of the system is asymptotically stable.
\end{prop}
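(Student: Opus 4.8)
The plan is to follow the template of the proof of Proposition \ref{prop diagonal}, but now reading off the asymptotics of $\tau(t)=\sqrt{V_3^2(t)}/V_2^2(t)$ from the real Jordan structure (\ref{real Jordan matrix}) of $A$. First I would write $r(t)=\mathrm{e}^{tA}r(0)$ block by block. On a real block $J_{n_j}(\lambda_j)$ every coordinate of $r(t)$ is a real linear combination of the functions $t^{l}\mathrm{e}^{\lambda_j t}$ with $0\le l\le n_j-1$; on a complex block $C_{n_k}(a_k,b_k)$ the coordinates are combinations of $t^{l}\mathrm{e}^{a_k t}\cos(b_k t)$ and $t^{l}\mathrm{e}^{a_k t}\sin(b_k t)$, $0\le l\le n_k-1$. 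Since $r^{(p)}(t)=A^{p}r(t)$, the same is true of every derivative, and differentiation neither changes the exponential/trigonometric type nor raises the polynomial degree: on the top-degree coordinate it multiplies $t^{l}\mathrm{e}^{\lambda_j t}$ by $\lambda_j$ and acts on the pair $\mathrm{e}^{a_k t}(\cos b_k t,\sin b_k t)^{\mathrm{T}}$ by the matrix $\Lambda_k$, so the Vandermonde-type mechanism used in (\ref{Vk^2}) persists in a perturbed form.

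Next, expanding $V_2^2(t)$ and $V_3^2(t)$ by Proposition \ref{prop V}, each becomes a finite sum of terms of the form $P(t)\,\mathrm{e}^{ct}\big(\alpha\cos(dt)+\beta\sin(dt)\big)$ (or $P(t)\mathrm{e}^{ct}$), where $P$ is a real polynomial, $c$ runs over sums of two (for $V_2^2$) or three (for $V_3^2$) real parts of eigenvalues, each counted with the multiplicity its block allows, and $d$ over the matching $\pm$-combinations of imaginary parts. To each such $c$ I attach the largest polynomial degree that actually occurs, and I order the resulting pairs (rate, degree) lexicographically, exactly as $\Delta_1,\Delta_2$ were compared in (\ref{align limit ki}). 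Letting $\mu_{(1)}\ge\mu_{(2)}\ge\cdots$ denote the distinct real parts of eigenvalues (with their block-depths recorded), the dominant pair of $V_2^2(t)$ is governed by $\mu_{(1)},\mu_{(2)}$ and that of $V_3^2(t)$ by $\mu_{(1)},\mu_{(2)},\mu_{(3)}$, just as in the diagonal case. A non-cancellation claim must be checked here: for $r(0)\in S$ the coefficient of the dominant pair is not identically zero, because the leading coefficients of the relevant minors factor through a Vandermonde determinant in the distinct eigenvalues times $\prod_i r_i(0)\ne0$, and when the dominant pair is carried by oscillatory terms they enter through $\cos^2+\sin^2$ and do not cancel.

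With the dominant pairs in hand, $\lim_{t\to+\infty}\tau^2(t)=\lim V_3^2(t)/V_2^4(t)$ is controlled by comparing the dominant pair of $V_3^2(t)$ with that of $V_2^4(t)$: if the former is lexicographically smaller then $\tau(t)\to0$; if equal, $\tau(t)$ stays bounded along a sequence $t_m\to+\infty$. For $(1)$ I argue the contrapositive: if the zero solution is unstable then, by Proposition \ref{prop asy.stable}, either $\mu_{(1)}>0$ or some eigenvalue with zero real part lies in a block of size $\ge2$; in both cases a short inequality among $\mu_{(1)},\mu_{(2)},\mu_{(3)}$ (and the attached degrees), of the same flavour as $\lambda_{(1)}+\lambda_{(i)}>\lambda_{(i+1)}$ in (\ref{align limit ki}), shows the dominant pair of $V_3^2(t)$ is strictly below that of $V_2^4(t)$, whence $\tau(t)\to0$. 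Thus $\lim\tau(t)\ne0$ or nonexistent forces stability; the degenerate possibility $V_2(t)\equiv0$ (where $\tau\equiv0$ by convention) is vacuous. For $(2)$: $\tau(t)\to+\infty$ implies $\lim\tau\ne0$, so by $(1)$ the zero solution is stable and $\mathrm{Re}\{\lambda_i(A)\}\le0$ for all $i$. If some eigenvalue had zero real part it would, by stability, sit in a $1\times1$ real block (value $0$, excluded since $A$ is invertible) or a $2\times2$ block $\Lambda_k$ with $a_k=0$; in the latter case the same pair-comparison gives that the dominant pair of $V_3^2(t)$ does not exceed that of $V_2^4(t)$, so $\tau(t)$ does not tend to $+\infty$, a contradiction. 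Hence every eigenvalue has negative real part and, by Proposition \ref{prop asy.stable}, the zero solution is asymptotically stable.

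The main obstacle is that, in contrast with the diagonal case where $V_k^2(t)$ is a sum of positive multiples of exponentials and the largest exponent automatically wins, here both the polynomial factors coming from non-trivial Jordan blocks and the sign changes coming from the $\cos,\sin$ factors of complex blocks threaten to cancel the apparently dominant term; establishing the non-cancellation claim and, when the leading exponents of numerator and denominator coincide, tracking the polynomial degrees carefully, is the technical heart of the proof.
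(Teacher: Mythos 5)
Your proposal is correct and follows essentially the same route as the paper: reduce to the real Jordan form, compare the dominant exponential rates (and, when they tie, the polynomial degrees in $t$) of $V_3^2(t)$ and $V_2^4(t)$, and split into the cases $M>0$, $M=0$ with a zero-real-part eigenvalue in a block of size at least $2$, and $M=0$ with only $C_1(0,b)$ blocks on the imaginary axis. The non-cancellation issue you single out as the technical heart is precisely what the paper's Lemma \ref{lemma V2} and the explicit leading-coefficient computations in Lemmas \ref{lemma M>0}--\ref{lemma M=0 C2} are devoted to.
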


\subsection{Blocks $J_p(\lambda)$ and $C_m(a,b)$}

\

In order to study the matrices in real Jordan canonical form (\ref{real Jordan matrix}), we first consider the blocks of the forms
\begin{align}\label{align J and C}
J_p(\lambda)=
\begin{pmatrix}
\lambda &1&&&\\[0.6em]
&\lambda &1&&\\
&&\lambda &\ddots&\\
&&&\ddots &1\\[0.5em]
&&&&\lambda
\end{pmatrix}_{p\times p}
\quad \text{and} \quad
C_m(a,b)=
\begin{pmatrix}
\Lambda & I_2&&&\\[0.6em]
&\Lambda & I_2&&\\
&&\Lambda &\ddots&\\
&&&\ddots & I_2\\[0.5em]
&&&&\Lambda
\end{pmatrix}_{2m\times 2m},
\end{align}
where
$\lambda,a,b\in\mathbb{R}$, $b>0$,
and
$
\Lambda=
\begin{pmatrix}
 a&b\\
-b&a
\end{pmatrix}
$. Part of this subsection goes back to the work as far as \cite{Wang2}.

\par
$(1)$ For a $J_p(\lambda)$ block, by direct calculation, we obtain
\begin{align}\label{align J^2}
J_p^2(\lambda)=
\begin{pmatrix}
\lambda^2 & 2\lambda & 1 && \\
& \lambda^2 & 2\lambda & \ddots & \\
&& \lambda^2 & \ddots & 1 \\
&&& \ddots & 2\lambda \\[0.5em]
&&&& \lambda^2
\end{pmatrix}_{p\times p},
\quad
J_p^3(\lambda)=
\begin{pmatrix}
\lambda^3 & 3\lambda^2 & 3\lambda & 1 && \\
& \lambda^3 & 3\lambda^2 & 3\lambda & \ddots & \\
&& \lambda^3 & 3\lambda^2 & \ddots & 1 \\
&&& \lambda^3 & \ddots & 3\lambda \\
&&&& \ddots & 3\lambda^2 \\[0.5em]
&&&&& \lambda^3
\end{pmatrix}_{p\times p},
\end{align}
and we have the exponential function
\begin{align}\label{align J exp}
\mathrm{e}^{tJ_p(\lambda)}=
\mathrm{e}^{\lambda t}
\begin{pmatrix}
1 & t & \frac{t^2}{2!} & \frac{t^3}{3!} & \cdots & \frac{t^{p-1}}{(p-1)!} \\[0.5em]
  & 1 & t & \frac{t^2}{2!} & \cdots & \frac{t^{p-2}}{(p-2)!} \\[0.5em]
  &   & 1 & t & \cdots & \frac{t^{p-3}}{(p-3)!} \\[0.5em]
  &   &   & \ddots & \ddots & \vdots \\[0.5em]
&&&& 1 & t \\[0.5em]
&&&&& 1
\end{pmatrix}.
\end{align}

\par
For the system $\dot{r}(t)=J_p(\lambda)r(t)$, by substituting (\ref{align J exp}) into $r(t)=\mathrm{e}^{tJ_p(\lambda)}r(0)$,
we obtain the expressions of the coordinates of $r(t)$
\begin{align}\label{align rk P}
 r_k(t)=\mathrm{e}^{\lambda t} P_{p;k}(t) \quad ( k=1,2,\cdots,p ),
\end{align}
where the polynomial
\begin{align}\label{align P}
P_{p;k}(t)=\sum_{l=0}^{p-k} \frac{r_{k+l}(0)}{l!}\, t^l.
\end{align}

\par
Substituting (\ref{align J and C}) and (\ref{align J^2}) into $r^{(s)}(t)=J_p^s(\lambda) r(t)$ for $s=1,2,3$, combined with (\ref{align rk P}), we see that the coordinates of the derivatives of $r(t)$ are
\begin{align}\label{align J der}
 \dot r_k(t)
 &=\lambda r_k(t) + r_{k+1}(t)
 =\mathrm{e}^{\lambda t} \left\{ \lambda P_{p;k}(t) + P_{p;k+1}(t) \right\}, \nonumber\\[0.5em]
 \ddot r_k(t)
 &=\lambda^2 r_k(t) + 2\lambda r_{k+1}(t) + r_{k+2}(t)
 =\mathrm{e}^{\lambda t} \left\{ \lambda^2 P_{p;k}(t) + 2\lambda P_{p;k+1}(t) + P_{p;k+2}(t) \right\}, \nonumber\\[0.5em]
  \dddot r_k(t)
 &=\lambda^3 r_k(t) +3\lambda^2 r_{k+1}(t) + 3\lambda r_{k+2}(t) + r_{k+3}(t) \nonumber\\
 &=\mathrm{e}^{\lambda t} \left\{ \lambda^3 P_{p;k}(t) +3\lambda^2 P_{p;k+1}(t) + 3\lambda P_{p;k+2}(t) + P_{p;k+3}(t) \right\},
\end{align}
where we have a convention that $r_k(t)=0$ for $k>p$.
\par
We see that if $k\in\{1,2,\cdots,p\}$, then  $\deg(P_{p;k}(t))=p-k$;
if $k>p$, then $P_{p;k}(t)=0$.

\par
$(2)$ For a $C_m(a,b)$ block, a direct calculation gives
\begin{align}\label{align C^2}
C_m^2(a,b)=
\begin{pmatrix}
\Lambda^2 & 2\Lambda & I_2 && \\
& \Lambda^2 & 2\Lambda & \ddots & \\
&& \Lambda^2 & \ddots & I_2 \\[-0.1em]
&&& \ddots & 2\Lambda \\[0.5em]
&&&& \Lambda^2
\end{pmatrix}_{2m\times 2m},
\quad
C_m^3(a,b)=
\begin{pmatrix}
\Lambda^3 & 3\Lambda^2 & 3\Lambda & I_2 && \\
& \Lambda^3 & 3\Lambda^2 & 3\Lambda & \ddots & \\
&& \Lambda^3 & 3\Lambda^2 & \ddots & I_2 \\
&&& \Lambda^3 & \ddots & 3\Lambda \\
&&&& \ddots & 3\Lambda^2 \\[0.5em]
&&&&& \Lambda^3
\end{pmatrix}_{2m\times 2m},
\end{align}
where
$
\Lambda^2=
\begin{pmatrix}
a^2-b^2 & 2ab \\[0.7em]
-2ab & a^2-b^2
\end{pmatrix}
$,
and
$
\Lambda^3=
\begin{pmatrix}
a(a^2-3b^2) & b(3a^2-b^2) \\[0.7em]
-b(3a^2-b^2) & a(a^2-3b^2)
\end{pmatrix}
$;
and we have the exponential function
\begin{align}\label{align C exp}
\mathrm{e}^{tC_m(a,b)}=
\mathrm{e}^{a t}
\begin{pmatrix}
R & tR & \frac{t^2}{2!}R & \frac{t^3}{3!}R & \cdots & \frac{t^{m-1}}{(m-1)!}R \\[0.7em]
  & R & tR & \frac{t^2}{2!}R & \cdots & \frac{t^{m-2}}{(m-2)!}R \\[0.7em]
  &   & R & tR & \cdots & \frac{t^{m-3}}{(m-3)!}R \\[0.7em]
  &   &   & \ddots & \ddots & \vdots \\[0.7em]
&&&& R & tR \\[0.7em]
&&&&& R
\end{pmatrix},
\end{align}
where
$
R=
\begin{pmatrix}
\cos bt & \sin bt \\[0.7em]
-\sin bt & \cos bt
\end{pmatrix}
$.

\par
For the system $\dot{r}(t)=C_m(a,b)r(t)$, write
\begin{align*}
r(t)&=(r_{1}(t),r_{2}(t),\cdots,r_{2m-1}(t),r_{2m}(t))^\mathrm{T} \\
&=(r_{1,1}(t),r_{1,2}(t),r_{2,1}(t),r_{2,2}(t),\cdots,r_{m,1}(t),r_{m,2}(t))^\mathrm{T}.
\end{align*}
Substituting (\ref{align C exp}) into $r(t)=\mathrm{e}^{tC_m(a,b)}r(0)$,
we obtain the expressions of the coordinates of $r(t)$
\begin{align}\label{align ri C}
r_{i,1}(t)=\mathrm{e}^{at}T_{m;i,1}(t),\quad
r_{i,2}(t)=\mathrm{e}^{at}T_{m;i,2}(t)\quad (i=1,2,\cdots,m),
\end{align}
where
\begin{align}\label{align T}
\left\{ \
\begin{aligned}
T_{m;i,1}(t)&=\sum_{k=0}^{m-i}\frac{t^k}{k!}(r_{2i+2k-1}(0)\cos{bt}+r_{2i+2k}(0)\sin{bt}), \\
T_{m;i,2}(t)&=\sum_{k=0}^{m-i}\frac{t^k}{k!}(-r_{2i+2k-1}(0)\sin{bt}+r_{2i+2k}(0)\cos{bt}).
\end{aligned}
\right.
\end{align}
By (\ref{align T}), we have
\begin{align}\label{align T^2+T^2}
T_{m;1,1}^2(t)+ T_{m;1,2}^2(t)
=\frac{ r_{m,1}^2(0)+r_{m,2}^2(0)}{\left[(m-1)!\right]^2} t^{2m-2} + \sum_{\varphi=0}^{2m-3} t^{\varphi} B_\varphi(t),
\end{align}
where each $B_\varphi(t)$ is a bounded function.

\par
Substituting (\ref{align J and C}) and (\ref{align C^2}) into $r^{(s)}(t)=C_m^s(a,b) r(t)$ for $s=1,2,3$,
combined with (\ref{align ri C}),
we see that the coordinates of the derivatives of $r(t)$ are

\begin{align}\label{align C der}
\dot r_{i,1}(t)
=&ar_{i,1}(t)+br_{i,2}(t)+r_{i+1,1}(t)
=\mathrm{e}^{at}\left\{aT_{m;i,1}(t)+bT_{m;i,2}(t)+T_{m;i+1,1}(t)\right\}, \nonumber\\[0.5em]
\dot r_{i,2}(t)
=&-br_{i,1}(t)+ar_{i,2}(t)+r_{i+1,2}(t)
=\mathrm{e}^{at}\left\{-bT_{m;i,1}(t)+aT_{m;i,2}(t)+T_{m;i+1,2}(t)\right\}, \nonumber\displaybreak[0]\\[0.5em]
\ddot r_{i,1}(t)
=& \left(a^2-b^2\right)r_{i,1}(t)+2abr_{i,2}(t)+2ar_{i+1,1}(t)+2br_{i+1,2}(t)+r_{i+2,1}(t) \nonumber\\
=& \mathrm{e}^{at} \left\{\left(a^2-b^2\right)T_{m;i,1}(t)+2abT_{m;i,2}(t)+2aT_{m;i+1,1}(t)+2bT_{m;i+1,2}(t)+T_{m;i+2,1}(t) \right\}, \nonumber\\[0.5em]
\ddot r_{i,2}(t)
=& -2abr_{i,1}(t)+\left(a^2-b^2\right)r_{i,2}(t)-2br_{i+1,1}(t)+2ar_{i+1,2}(t)+r_{i+2,2}(t) \nonumber\\
=& \mathrm{e}^{at} \left\{-2abT_{m;i,1}(t)+\left(a^2-b^2\right)T_{m;i,2}(t)-2bT_{m;i+1,1}(t)+2aT_{m;i+1,2}(t)+T_{m;i+2,2}(t) \right\}, \nonumber\displaybreak[0]\\[0.5em]
\dddot r_{i,1}(t)
=&a\left(a^2-3b^2\right)r_{i,1}(t)+b\left(3a^2-b^2\right)r_{i,2}(t)
+3\left(a^2-b^2\right)r_{i+1,1}(t)+6abr_{i+1,2}(t) \nonumber\\
&+3ar_{i+2,1}(t)+3br_{i+2,2}(t)
+r_{i+3,1}(t) \nonumber\\
=&\mathrm{e}^{at}\left\{a\left(a^2-3b^2\right)T_{m;i,1}(t)+b\left(3a^2-b^2\right)T_{m;i,2}(t)
+3\left(a^2-b^2\right)T_{m;i+1,1}(t)+6abT_{m;i+1,2}(t) \right. \nonumber\\
&\left.+3aT_{m;i+2,1}(t)+3bT_{m;i+2,2}(t)
+T_{m;i+3,1}(t)\right\}, \nonumber\\[0.5em]
\dddot r_{i,2}(t)
=&-b\left(3a^2-b^2\right)r_{i,1}(t)+a\left(a^2-3b^2\right)r_{i,2}(t)
-6abr_{i+1,1}(t)+3\left(a^2-b^2\right)r_{i+1,2}(t) \nonumber\\
&-3br_{i+2,1}(t)+3ar_{i+2,2}(t)
+r_{i+3,2}(t) \nonumber\\
=&\mathrm{e}^{at}\left\{-b\left(3a^2-b^2\right)T_{m;i,1}(t)+a\left(a^2-3b^2\right)T_{m;i,2}(t)
-6abT_{m;i+1,1}(t)+3\left(a^2-b^2\right)T_{m;i+1,2}(t) \right. \nonumber\\
&\left.-3bT_{m;i+2,1}(t)+3aT_{m;i+2,2}(t)
+T_{m;i+3,2}(t)\right\}, \nonumber\\
\end{align}
where we have a convention that if $i>m$, then
$r_{i,j}(t)=0 ~(j=1,2)$.

\par
It should be noted that in the following subsections we will consider the case where $A$ has more than one block of the form $J_p(\lambda)$ or $C_m(a,b)$, so when $P_{p;k}(t)$, $T_{m;i,1}(t)$ and $T_{m;i,2}(t)$ appear in the following, the $r_{k+l}(0)$ in (\ref{align P}) should be understood as the coordinate of $r(t)$ which corresponds to the $(k+l)$th row of the diagonal block corresponding to the $P_{p;k}(t)$, and the $r_{2i+2k-1}(0)$ and $r_{2i+2k}(0)$ in (\ref{align T}) should be understood as the coordinates of $r(t)$ which correspond to the $(2i+2k-1)$th and $(2i+2k)$th row of the diagonal block corresponding to the $T_{m;i,1}(t)$ and $T_{m;i,2}(t)$, respectively.

\subsection{Function $V_2(t)$}\label{subsection V2}

\

By Proposition \ref{prop V}, we have
\begin{align}\label{align V2}
V_2^2(t)
=
\sum_{1\leqslant i<j\leqslant n}
\begin{vmatrix}
\dot{r}_{i}(t) & \ddot{r}_{i}(t) \\[1em]
\dot{r}_{j}(t) & \ddot{r}_{j}(t)
\end{vmatrix}^2.
\end{align}
Considering the form of the expression of torsion $\tau(t)$, it is necessary to make a detailed analysis of the function $V_2(t)$.

\begin{lem} \label{lemma V2}
Suppose that $\dot{r}(t)=Ar(t)$ is a linear time-invariant system, where $A$ is an $n\times n$ matrix in real Jordan canonical form, and $r(t)\in\mathbb{R}^n$. The function $V_2(t)$ is given by (\ref{align V2}). Then for any given $r(0)\in S$, we have
\par
$(1)$ $V_2(t)\equiv 0$ if and only if
\begin{align}\label{align V2=0}
A=
\begin{pmatrix}
\lambda &&& \\[0.2em]
&\ddots && \\[0.2em]
&&\lambda & \\[0.2em]
&&& 0_{z\times z}
\end{pmatrix}
\quad (\lambda\in\mathbb{R})
\quad \text{or}
\quad
A=
\begin{pmatrix}
J_2(0) &&& \\[0.5em]
&\ddots && \\[0.5em]
&& J_2(0) & \\[0.5em]
&&& 0_{z\times z}
\end{pmatrix},
\end{align}
where $z\in\{0,1,\cdots,n\}$;
\par
$(2)$ if $V_2(t)\not\equiv 0$, then there exists a $T>0$, such that $V_2(t)>0$ for all $t>T$.
\end{lem}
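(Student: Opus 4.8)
The plan is to rewrite the formula~(\ref{align V2}) for $V_2$ in a coordinate-free way. Since $r(t)=\mathrm{e}^{tA}r_0$, we have $\dot{r}(t)=\mathrm{e}^{tA}(Ar_0)$ and $\ddot{r}(t)=A\dot{r}(t)$, so by the Lagrange identity
\begin{align*}
V_2^2(t)=\left\|\dot{r}(t)\right\|^2\left\|\ddot{r}(t)\right\|^2-\left\langle\dot{r}(t),\ddot{r}(t)\right\rangle^2,
\end{align*}
the squared area of the parallelogram spanned by $\dot{r}(t)$ and $A\dot{r}(t)$. Consequently, at a fixed $t$, $V_2(t)=0$ if and only if $\dot{r}(t)=0$ or $\dot{r}(t)$ is a real eigenvector of $A$; moreover $\dot{r}(t)=\mathrm{e}^{tA}(Ar_0)$ vanishes for one (hence every) $t$ exactly when $Ar_0=0$, and then $V_2\equiv0$. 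This reformulation drives the whole proof.

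For part~(1), ``$\Leftarrow$'' is immediate: if $A=\lambda I_{n-z}\oplus0_{z\times z}$ then $\ddot{r}(t)\equiv\lambda\dot{r}(t)$, and if $A$ is a direct sum of $J_2(0)$ blocks and a zero block then $\ddot{r}(t)\equiv0$; either way $V_2\equiv0$. For ``$\Rightarrow$'', assume $r_0\in S$ and $V_2\equiv0$, and evaluate the reformulation at $t=0$: either $Ar_0=0$, or $Ar_0$ is a real eigenvector of $A$. In the first case a row-by-row inspection of $Ar_0$ --- using that no coordinate of $r_0$ vanishes --- excludes every Jordan block except $J_1(0)$, since a $C_m(a,b)$ block is invertible and a $J_p(\lambda)$ block with $p\geqslant2$ or $\lambda\neq0$ would force some coordinate of $r_0$ to be zero; hence $A=0$, which is of the first type. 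In the second case write $A(Ar_0)=\mu Ar_0$ with $\mu\in\mathbb{R}$.

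It remains to recover the Jordan structure of $A$ from the single relation ``$Ar_0$ is a real eigenvector of $A$, $r_0\in S$'', and this bookkeeping is the main obstacle. Blockwise the relation reads $B\,r_0^{(B)}\in\ker(B-\mu I)$ for every Jordan block $B$ of $A$. Testing each block type against the assumption that $r_0^{(B)}$ has no zero coordinate: a $C_m(a,b)$ block is impossible (invertible, no real eigenvalue); a $J_p(\lambda)$ block with $p\geqslant2$ is impossible unless $p=2$ and $\lambda=0$ (the last coordinate of $B\,r_0^{(B)}$ is $\lambda(r_0^{(B)})_p\neq0$ when $\lambda\neq0$, and for $p\geqslant3$ the vector $B\,r_0^{(B)}$ cannot lie in $\ker(B-\mu I)$ for any $\mu$); and a $J_1(\lambda)$ block with $\lambda\neq0$ forces $\mu=\lambda\neq0$, which rules out any $J_2(0)$ block (a $J_2(0)$ block forces $\mu=0$). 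Collecting cases: either all blocks are $J_1(\lambda)$'s sharing a common $\lambda\neq0$ together with some $J_1(0)$'s, i.e.\ $A=\lambda I_{n-z}\oplus0_{z\times z}$, or all blocks are $J_1(0)$'s and $J_2(0)$'s, i.e.\ $A$ has the second form. (One could instead carry this out from the explicit expressions~(\ref{align J der})--(\ref{align C der}), but that is longer.)

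For part~(2), suppose $V_2\not\equiv0$. Then $Ar_0\neq0$, so $\dot{r}(t)=\mathrm{e}^{tA}(Ar_0)\neq0$ for all $t$. If $V_2(t_0)=0$ for some $t_0\geqslant0$, the reformulation gives $\dot{r}(t_0)\in\ker(A-\mu I)$ for some $\mu\in\mathbb{R}$; but $\ker(A-\mu I)$ is $A$-invariant, hence $\mathrm{e}^{sA}$-invariant, so $\dot{r}(t)=\mathrm{e}^{(t-t_0)A}\dot{r}(t_0)\in\ker(A-\mu I)$ for all $t$, whence $\ddot{r}(t)=\mu\dot{r}(t)$ and $V_2\equiv0$ --- a contradiction. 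Therefore $V_2(t)>0$ for every $t\geqslant0$, so any $T\geqslant0$ works; in fact this last conclusion uses neither the real Jordan form of $A$ nor the restriction $r_0\in S$.
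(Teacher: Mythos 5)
Your proof is correct, and it takes a genuinely different route from the paper's. The paper proves the lemma by exhausting the possible Jordan-block configurations outside (\ref{align V2=0}) --- a $C_m(a,b)$ block, a $J_p(\lambda)$ block with $p\geqslant 3$ or with $p=2,\lambda\neq0$, two $J_1$ blocks with distinct nonzero eigenvalues, or a $J_2(0)$ block coexisting with a $J_1(\lambda)$, $\lambda\neq0$ --- and in each case extracting, from the explicit coordinate formulas of \cite{Wang2}, a positive leading term of $V_2^2(t)$ as $t\to+\infty$; both the exhaustiveness of this case list and the computations themselves carry the burden of the argument, and the conclusion is only that $V_2(t)>0$ for $t$ large. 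You instead observe via the Gram/Lagrange identity that $V_2(t)=0$ exactly when $\dot r(t)$ and $A\dot r(t)$ are linearly dependent, which buys two things. First, for part $(2)$, a single zero of $V_2$ at $t_0$ makes $\dot r(t_0)$ a real eigenvector of $A$, and since eigenspaces are invariant under $\mathrm{e}^{sA}$ this forces $\ddot r(t)=\mu\dot r(t)$ for all $t$ and hence $V_2\equiv0$; so you get the strictly stronger conclusion $V_2(t)>0$ for \emph{every} $t\geqslant0$, with no use of the Jordan form or of $r(0)\in S$. Second, for part $(1)$, the structure of $A$ is read off directly from the blockwise eigenvector condition $B r_0^{(B)}\in\ker(B-\mu I)$ at $t=0$, which replaces the paper's case exhaustion by a short elimination argument (your handling of each block type is accurate, including the incompatibility of $\mu=\lambda\neq0$ with a $J_2(0)$ block). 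The only cost is that your argument is coordinate-free and so does not produce the explicit asymptotic expansions of $V_2^2(t)$ such as (\ref{align C V_2^2}) and (\ref{align J V_2^2}), which the paper reuses later in the proofs of Lemmas \ref{lemma M>0} and \ref{lemma M=0 CH}; as a proof of Lemma \ref{lemma V2} itself, however, your version is complete, self-contained, and sharper.
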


\begin{proof}
Suppose $A$ is an $n\times n$ matrix in real Jordan canonical form.
\par
(a) If $A$ has a diagonal block $C_m(a,b)$ (without loss of generality, we assume that this $C_m(a,b)$ block is the first diagonal block of $A$), then by (\ref{align ri C}), (\ref{align T^2+T^2}), (\ref{align C der}), and the analysis of Subsection $4.4$ of \cite{Wang2}, we have
\begin{align}\label{align C V_2^2}
V_2^2(t)
\geqslant
\sum_{1\leqslant i<j\leqslant 2m}
\begin{vmatrix}
\dot{r}_{i}(t) & \ddot{r}_{i}(t) \\[1em]
\dot{r}_{j}(t) & \ddot{r}_{j}(t)
\end{vmatrix}^2
=
\mathrm{e}^{4at} \left( C t^{4m-4} + \sum_{\varphi=0}^{4m-5} t^{\varphi} B_\varphi(t) \right),
\end{align}
where the constant
$
C=\frac{ b^2 \left(a^2+b^2\right)^2 \left(r_{m,1}^2(0)+r_{m,2}^2(0)\right)^2 }{\left[(m-1)!\right]^4}>0,
$
and
$
B_\varphi(t) ~( \varphi=0,1,\cdots,4m-5 )
$
are bounded functions.
It follows that there exists a $T>0$, such that $V_2^2(t)>0$ for all $t>T$.

\par
(b) If A has a diagonal block $J_p(\lambda)$, where $p\geqslant 3$ or
$\left\{
\begin{aligned}
&p=2, \\
&\lambda\neq 0,
\end{aligned}\right.$
then by (\ref{align rk P}), (\ref{align J der}), and the analysis of Subsection $4.2$ of \cite{Wang2}, we have
\begin{align*}
\sum_{1\leqslant i<j\leqslant p}
\begin{vmatrix}
\dot{r}_{i}(t) & \ddot{r}_{i}(t) \\[1em]
\dot{r}_{j}(t) & \ddot{r}_{j}(t)
\end{vmatrix}^2
=
\mathrm{e}^{4\lambda t} f(t),
\end{align*}
where $f(t)$ is a polynomial, and
\par
(b1) if $p\geqslant 3$, then
$\deg(f(t))=\left\{
\begin{aligned}
&4(p-2), &\lambda\neq0,\\
&4(p-3), &\lambda=0;
\end{aligned}\right.$
\par
(b2) if $p=2$ and $\lambda\neq 0$, then
$f(t)=\lambda^4 r_{2}^4(0)>0$. \\
we see that for both (b1) and (b2), there exists a $T>0$, such that $f(t)>0$ for all $t>T$,
thus
\begin{align}\label{align J V_2^2}
V_2^2(t)\geqslant \sum_{1\leqslant i<j\leqslant p}
\begin{vmatrix}
\dot{r}_{i}(t) & \ddot{r}_{i}(t) \\[1em]
\dot{r}_{j}(t) & \ddot{r}_{j}(t)
\end{vmatrix}^2
=
\mathrm{e}^{4\lambda t} f(t)
>0
\end{align}
for all $t>T$.

\par
(c) If A has $J_1(\lambda_1)$ and $J_1(\lambda_2)$ as its diagonal blocks,
where $\lambda_1\neq \lambda_2$ and $\lambda_1 \lambda_2\neq 0$, without loss of generality we can assume $A=\mathrm{diag}\{ J_1(\lambda_1), J_1(\lambda_2), \cdots \}$, then by (\ref{Vk^2}), we have
\begin{align*}
V_2^2(t)\geqslant
\mathrm{e}^{2( \lambda_1 + \lambda_2 ) t}
\left\{ \lambda_1 \lambda_2 (\lambda_2 - \lambda_1)r_{1}(0)r_{2}(0) \right\}^2
>0.
\end{align*}

\par
(d) If both $J_2(0)$ and $J_1(\lambda)\ (\lambda\neq 0)$ are diagonal blocks of $A$, without loss of generality we can assume $A=\mathrm{diag}\{ J_2(0), J_1(\lambda), \cdots \}$, then we have
\begin{align*}
V_2^2(t)
&\geqslant
\begin{vmatrix}
\dot{r}_{1}(t) & \ddot{r}_{1}(t) \\[1em]
\dot{r}_{3}(t) & \ddot{r}_{3}(t)
\end{vmatrix}^2
=
\begin{vmatrix}
r_2(t) & 0 \\[1em]
\lambda r_3(t) & \lambda^2 r_3(t)
\end{vmatrix}^2
\\&=
\begin{vmatrix}
r_2(0) & 0 \\[1em]
\lambda \mathrm{e}^{\lambda t} r_3(0) & \lambda^2 \mathrm{e}^{\lambda t} r_3(0)
\end{vmatrix}^2
=
\left( \lambda^2 \mathrm{e}^{\lambda t} r_2(0) r_3(0) \right)^2
>0.
\end{align*}

\par
In the case of (a)(b)(c)(d), we have show that there exists a $T>0$, such that $V_2(t)>0$ for all $t>T$.
Note that (a)(b)(c)(d) cover all cases where $A$ is a matrix in real Jordan canonical form except the two cases in (\ref{align V2=0}).
Nevertheless, by direct calculation, we have $V_2(t)\equiv 0$ for the two cases in (\ref{align V2=0}), which completes the proof.
\end{proof}

\par
From Lemma \ref{lemma V2}, we know that except for the two trivial cases in (\ref{align V2=0}), we have $V_2(t)>0$ when $t$ is sufficiently large, that is to say, there exists a $T>0$, such that we have the expression (\ref{align torsion}) of torsion $\tau(t)$ for all $t>T$, which avoids a lot of potential trouble when we consider the limit of $\tau(t)$ as $t\to +\infty$ in the proof of Theorem \ref{thm main 2}.

\subsection{Function $V_3(t)$}

\

The function $V_3(t)$ is given by Proposition \ref{prop V}. In fact, we have
\begin{align}\label{align V3}
V_3^2(t)=
\sum_{1\leqslant i<j<k\leqslant n}
\begin{vmatrix}
\dot{r}_{i}(t) & \ddot{r}_{i}(t) & \dddot{r}_{i}(t) \\[1em]
\dot{r}_{j}(t) & \ddot{r}_{j}(t) & \dddot{r}_{j}(t) \\[1em]
\dot{r}_{k}(t) & \ddot{r}_{k}(t) & \dddot{r}_{k}(t)
\end{vmatrix}^2.
\end{align}
\par
By (\ref{align J der}) and (\ref{align C der}), we see that all coordinates of $r^{(s)}(t)\,(s=1,2,3)$ can be expressed in the form of
\begin{align*}
\dot r_{i;k}(t)&=\mathrm{e}^{\mathrm{Re}(\lambda_i)t} f_{i;k}(t), \\
\ddot r_{i;k}(t)&=\mathrm{e}^{\mathrm{Re}(\lambda_i)t} g_{i;k}(t), \\
\dddot r_{i;k}(t)&=\mathrm{e}^{\mathrm{Re}(\lambda_i)t} h_{i;k}(t),
\end{align*}
where $r_{i;k}^{(s)}(t)$ denotes the coordinate of $r^{(s)}(t)$ corresponding to the $k$th row of the $i$th diagonal block of $A$. Hence

\begin{align}\label{align V3 det}
\begin{vmatrix}
\dot{r}_{i_1;k_1}(t) & \ddot{r}_{i_1;k_1}(t) & \dddot{r}_{i_1;k_1}(t) \\[1em]
\dot{r}_{i_2;k_2}(t) & \ddot{r}_{i_2;k_2}(t) & \dddot{r}_{i_2;k_2}(t) \\[1em]
\dot{r}_{i_3;k_3}(t) & \ddot{r}_{i_3;k_3}(t) & \dddot{r}_{i_3;k_3}(t)
\end{vmatrix}^2
=\mathrm{e}^{2\left\{\mathrm{Re}\left(\lambda_{i_1}\right)+\mathrm{Re}\left(\lambda_{i_2}\right)+\mathrm{Re}\left(\lambda_{i_3}\right)\right\}t} G(t),
\end{align}
where
$G(t)$
is a linear combination of terms in the form of $t^\beta B_\gamma(t)$,
where $B_\gamma(t)$ is a bounded function.

\par
In the remainder of this paper, set
\begin{align*}
M=\max\{\mathrm{Re}(\lambda)| \lambda\in \sigma(A)\}.
\end{align*}
Then by (\ref{align V3}) and (\ref{align V3 det}), we obtain
\begin{align}\label{align eta}
\Delta_1\leqslant 6M,
\end{align}
where $\Delta_1$ denotes the maximum values of $\alpha$ in the terms of the form $\mathrm{e}^{\alpha t} t^\beta B_\gamma(t)$ in $V_3^2(t)$.

\subsection{Proof of Theorem \ref{thm main 2} (1)}

\

In order to give a proof of Theorem \ref{thm main 2} (1), we only need to prove Proposition \ref{prop torsion Jordan} (1).
In this subsection, we will discuss the two cases in which the zero solution of the system is unstable, and obtain $\lim\limits_{t\to+\infty}\tau(t)=0$.
In fact, we will prove Lemma \ref{lemma M>0} and Lemma \ref{lemma M=0 CH}.

\begin{lem} \label{lemma M>0}
Under the assumptions of Proposition \ref{prop torsion Jordan}, if $M>0$, then for any given $r(0)\in S$, we have $\lim\limits_{t\to+\infty}\tau(t)=0$.
\end{lem}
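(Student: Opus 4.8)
\textbf{Proof plan for Lemma \ref{lemma M>0}.}
The plan is to show that when $M>0$, the torsion $\tau(t)=V_3(t)/V_2^2(t)$ is forced to $0$ by a mismatch of exponential growth rates between numerator and denominator. First I would dispose of the degenerate possibility $V_2(t)\equiv 0$: by Lemma \ref{lemma V2}, this happens only for the two matrices listed in (\ref{align V2=0}), both of which have $M=0$, so under the hypothesis $M>0$ we are not in that situation and, again by Lemma \ref{lemma V2} (2), there is a $T>0$ with $V_2(t)>0$ for $t>T$; hence the formula (\ref{align torsion}) is valid for large $t$ and it suffices to control the exponent of $\mathrm{e}$ in $\tau^2(t)=V_3^2(t)/V_2^4(t)$.

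Next I would pin down the leading exponential rate of $V_2^2(t)$. Since $r(0)\in S$, all coordinates of the initial vector are nonzero, so the ``generic'' leading terms in the block-by-block expansions (\ref{align C V_2^2}), (\ref{align J V_2^2}), and the cases (c),(d) in the proof of Lemma \ref{lemma V2} do not vanish. The point is that $V_2^2(t)$ contains a genuinely nonzero term of the form $C\,\mathrm{e}^{4Mt} t^{\beta_0} + (\text{lower order})$ with $C>0$: this comes either from a single block realizing the maximal real part $M$ (a $C_m(a,b)$ block with $a=M$, contributing $\mathrm{e}^{4Mt}$ as in (\ref{align C V_2^2}) since $a^2+b^2>0$; or a $J_p(\lambda)$ block with $\lambda=M>0$, contributing $\mathrm{e}^{4\lambda t}$ as in (\ref{align J V_2^2}) where $f(t)>0$ eventually), or from a pair of blocks whose real parts sum to $2M$ (e.g.\ two real eigenvalues, or a real eigenvalue and the real part of a complex one), where the $2\times 2$ minor built from the two top rows gives a nonzero coefficient times $\mathrm{e}^{2(\cdot + \cdot)t}=\mathrm{e}^{4Mt}$, exactly as in cases (c),(d). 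In every case the largest exponent appearing in $V_2^2(t)$ is precisely $4M$ (it cannot exceed $4M$ since each $\dot r, \ddot r$ coordinate grows no faster than $\mathrm{e}^{Mt}$ times a polynomial, so each $2\times2$ minor squared grows no faster than $\mathrm{e}^{4Mt}$ times a polynomial). Therefore the leading exponent of $V_2^4(t)$ is $8M$.

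Then I would compare with the numerator: by (\ref{align eta}), the maximal exponent $\Delta_1$ appearing in $V_3^2(t)$ satisfies $\Delta_1\leqslant 6M$. Since $M>0$ we have $6M < 8M$, so the exponent of $\mathrm{e}$ in $\tau^2(t)=V_3^2(t)/V_2^4(t)$ is at most $6M-8M=-2M<0$, while all the remaining factors are ratios of polynomials (more precisely, linear combinations of $t^\beta B_\gamma(t)$ with $B_\gamma$ bounded, divided by a polynomial with positive leading coefficient). Hence $\tau^2(t)=\mathrm{e}^{-2Mt}\cdot(\text{at most polynomial growth})\to 0$, and consequently $\lim_{t\to+\infty}\tau(t)=0$.

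\textbf{Main obstacle.} The routine part is the exponent bookkeeping; the delicate point is verifying that the coefficient of the $\mathrm{e}^{4Mt}$ term in $V_2^2(t)$ is genuinely nonzero for \emph{every} invertible real Jordan form and every $r(0)\in S$ — i.e.\ that no cancellation among the many minors in (\ref{align V2}) destroys the leading rate. This is where the hypothesis $r(0)\in S$ (all coordinates nonzero) is essential, and where I would lean on the block-wise lower bounds already established in the proof of Lemma \ref{lemma V2}: since $V_2^2(t)$ is a \emph{sum of squares} of minors, it dominates the square of any single minor, and Lemma \ref{lemma V2} exhibits, in each structural case, one minor whose square has leading term a positive constant times $\mathrm{e}^{4Mt} t^{\beta_0}$. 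Thus the sum-of-squares structure converts the potential cancellation problem into a one-sided lower bound, and the upper bound $\Delta_1 \le 6M$ finishes it. (Strictly, one should also note that if $M>0$ then $0\in\sigma(A)$ is irrelevant to the top rate, so invertibility of $A$ plays no role in this particular lemma — it only matters for part (2) of the proposition.)
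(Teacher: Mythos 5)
Your plan correctly reproduces the paper's argument in the cases where the maximal real part $M$ is realized by a block of size at least $2$ or by a complex block: there $V_2^2(t)\geqslant \mathrm{e}^{4Mt}(\text{positive polynomial})$ by the sum-of-squares lower bound, $\Delta_1\leqslant 6M<8M=\Delta_2$, and $\tau^2(t)\to 0$. But your central claim --- ``in every case the largest exponent appearing in $V_2^2(t)$ is precisely $4M$'' --- is false, and the case where it fails is exactly the delicate one. If the only diagonal blocks with $\mathrm{Re}(\lambda)=M$ are $1\times 1$ blocks $J_1(M)$, then no single block produces a $2\times 2$ minor, and for \emph{two} such blocks in rows $i,j$ the minor
\begin{align*}
\begin{vmatrix}
\mathrm{e}^{Mt}Mr_i(0) & \mathrm{e}^{Mt}M^2r_i(0)\\[0.5em]
\mathrm{e}^{Mt}Mr_j(0) & \mathrm{e}^{Mt}M^2r_j(0)
\end{vmatrix}
\end{align*}
vanishes identically, since the rows are proportional. (Your parenthetical ``two real eigenvalues'' summing to $2M$ is precisely this vanishing configuration, because both must equal $M$.) Hence the top rate of $V_2^2(t)$ drops to $2(M+N)$, where $N$ is the largest real part strictly below $M$ (ignoring $J_1(0)$ blocks), and $\Delta_2=4(M+N)$ can be far smaller than $6M$ (e.g.\ $A=\mathrm{diag}(1,-10,\dots)$ gives $\Delta_2=-36$ while your numerator bound is only $\Delta_1\leqslant 6$). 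So the comparison $6M<8M$ does not close the argument. The missing idea, which the paper supplies in its case (c), is a matching refinement of the numerator bound: in each $3\times 3$ determinant of $V_3^2(t)$ at most one row can come from a $J_1(M)$ block (two such rows are again proportional), so $\Delta_1\leqslant 2(M+2N)$, and then $\Delta_1-\Delta_2\leqslant -2M<0$. One also has to verify, using $r(0)\in S$, that the $\mathrm{e}^{2(M+N)t}$ coefficient in $V_2^2(t)$ is genuinely positive, which the paper does block by block in (c1)--(c2).

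A smaller slip: your dismissal of the degenerate case $V_2\equiv 0$ on the grounds that both matrices in (\ref{align V2=0}) have $M=0$ is incorrect --- the scalar matrix $A=\lambda I$ with $\lambda>0$ is of the first form, is invertible, and has $M=\lambda>0$. The lemma still holds there, but only because of the paper's convention $\tau(t)\equiv 0$ when $V_2\equiv 0$, not because the case is vacuous.
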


\begin{proof}
Suppose $M>0$. Note that
\begin{align}\label{align tor^2}
\tau^2(t)
=\frac{ V_{3}^2(t) }{ V_{2}^4(t) }
=\frac{
 \sum_{1\leqslant i<j<k\leqslant n}
\begin{vmatrix}
\dot{r}_{i}(t) & \ddot{r}_{i}(t) & \dddot{r}_{i}(t) \\[1em]
\dot{r}_{j}(t) & \ddot{r}_{j}(t) & \dddot{r}_{j}(t) \\[1em]
\dot{r}_{k}(t) & \ddot{r}_{k}(t) & \dddot{r}_{k}(t)
\end{vmatrix}^2
}
{
\left(
 \sum_{1\leqslant p<q\leqslant n}
\begin{vmatrix}
\dot{r}_{p}(t) & \ddot{r}_{p}(t) \\[1em]
\dot{r}_{q}(t) & \ddot{r}_{q}(t)
\end{vmatrix}^2
\right)^2
},
\end{align}
where the functions $V_{3}^2(t)$ and $V_{2}^4(t)$ are both linear combinations of terms in the form of $\mathrm{e}^{\alpha t} t^\beta B_\gamma(t)$,
where each $B_\gamma(t)$ is a bounded function.
We will prove $\lim\limits_{t\to+\infty}\tau(t)=0$ for the following cases. For simplicity, let $t>0$.

\par
(a) If $A$ has a diagonal block $C_m(M,b)$,
then by
(\ref{align C V_2^2}), (\ref{align V3 det}), and (\ref{align eta}), we have
\begin{align*}
0
\leqslant \tau^2(t)
=\frac{ V_{3}^2(t) }{ V_{2}^4(t) }
&\leqslant \frac{ \mathrm{e}^{6Mt} F(t) + R(t) }{ \left\{\mathrm{e}^{4Mt} \left( C t^{4m-4} + \sum_{\varphi=0}^{4m-5} t^{\varphi} B_\varphi(t) \right) \right\}^2 } \\
&= \frac{ \mathrm{e}^{6Mt} F(t) + R(t) }{ \mathrm{e}^{8Mt} \left( C^2 t^{8m-8} + \sum_{\psi=0}^{8m-9} t^{\psi} B_\psi(t) \right) }
\to 0
\quad
(t\to +\infty),
\end{align*}
where the constant $C>0$, all $B_\varphi(t)$ and $B_\psi(t)$ are bounded functions, the function $F(t)$ is a linear combination of terms in the form of $t^\beta B_\gamma(t)$, and $R(t)$ is a linear combination of terms in the form of $\mathrm{e}^{\alpha t} t^\beta B_\gamma(t)$, where $\alpha<6M$, and each $B_\gamma(t)$ is a bounded function.
Hence we obtain
$\lim\limits_{t\to+\infty}\tau(t)=0$.

\par
(b) If $A$ has a diagonal block $J_p(M)\, (p\geqslant 2)$,
then by
(\ref{align J V_2^2}), (\ref{align V3 det}), and (\ref{align eta}), we have
\begin{align*}
0
\leqslant \tau^2(t)
=\frac{ V_{3}^2(t) }{ V_{2}^4(t) }
& \leqslant \frac{ \mathrm{e}^{6Mt} F(t) + R(t) }{ \mathrm{e}^{8Mt} f^2(t) }
\to 0
\quad
(t\to +\infty),
\end{align*}
where $f(t)$ is a polynomial satisfying $f(t)>0$ and $\deg(f(t))=4(p-2)$,
the function $F(t)$ is a linear combination of terms in the form of $t^\beta B_\gamma(t)$,
and $R(t)$ is a linear combination of terms in the form of $\mathrm{e}^{\alpha t} t^\beta B_\gamma(t)$, where $\alpha<6M$, and each $B_\gamma(t)$ is a bounded function. Hence we obtain
$\lim\limits_{t\to+\infty}\tau(t)=0$.

\par
(c) If in $A$ only those $J_1(M)$ blocks are diagonal blocks satisfying $\mathrm{Re}(\lambda)=M$, then we should consider the eigenvalues whose real part is less than $M$.
In fact, suppose two $J_1(M)$ diagonal blocks are in the $i$th and $j$th row of $A$, respectively. Then
\begin{align*}
\begin{vmatrix}
\dot{r}_{i}(t) & \ddot{r}_{i}(t) \\[1em]
\dot{r}_{j}(t) & \ddot{r}_{j}(t)
\end{vmatrix}^2
=\begin{vmatrix}
\mathrm{e}^{Mt} M r_i(0) & \mathrm{e}^{Mt} M^2 r_i(0) \\[1em]
\mathrm{e}^{Mt} M r_j(0) & \mathrm{e}^{Mt} M^2 r_j(0)
\end{vmatrix}^2
=0,
\end{align*}
which means this term has no contribution to the value of $V_2^2(t)$.
In addition, note that $J_1(0)$ diagonal blocks in $A$ do not affect the value of $\tau(t)$.
We define
\begin{align*}
N=\max\{\mathrm{Re}(\lambda)| \lambda\in \tilde\sigma(A)\backslash \{M\} \},
\end{align*}
where $\tilde\sigma(A)$ denotes the set of eigenvalues of $A$ which excluding the zero eigenvalues in $J_1(0)$ blocks.

\par
(c1) Suppose that $A$ has a diagonal block $C_m(N,b)$.
Let $r_M(t)$ denote the coordinate of $r(t)$ corresponding to the row of a diagonal block $J_1(M)$ of $A$,
and $r_{N,1}(t)$, $r_{N,2}(t)$ denote the coordinate of $r(t)$ corresponding to the first and second row of the diagonal block $C_m(N,b)$ of $A$, respectively.
Then by (\ref{align T^2+T^2}) and (\ref{align C der}), we have
\begin{align}\label{align N C}
&\begin{vmatrix}
\dot{r}_{M}(t) & \ddot{r}_{M}(t) \\[1em]
\dot{r}_{N,1}(t) & \ddot{r}_{N,1}(t)
\end{vmatrix}^2
+\begin{vmatrix}
\dot{r}_{M}(t) & \ddot{r}_{M}(t) \\[1em]
\dot{r}_{N,2}(t) & \ddot{r}_{N,2}(t)
\end{vmatrix}^2
\nonumber\\
=&\begin{vmatrix}
\mathrm{e}^{Mt} M r_M(0) & \mathrm{e}^{Mt} M^2 r_M(0) \\[1em]
\mathrm{e}^{Nt}\left\{NT_{m;1,1}(t)+bT_{m;1,2}(t)+T_{m;2,1}(t)\right\} &\quad \mathrm{e}^{Nt} \left\{\left(N^2-b^2\right)T_{m;1,1}(t)+2NbT_{m;1,2}(t)+\cdots \right\}
\end{vmatrix}^2
\nonumber\\
&+\begin{vmatrix}
\mathrm{e}^{Mt} M r_M(0) & \mathrm{e}^{Mt} M^2 r_M(0) \\[1em]
\mathrm{e}^{Nt}\left\{-bT_{m;1,1}(t)+NT_{m;1,2}(t)+T_{m;2,2}(t)\right\} &\quad \mathrm{e}^{Nt} \left\{-2NbT_{m;1,1}(t)+\left(N^2-b^2\right)T_{m;1,2}(t)+\cdots \right\}
\end{vmatrix}^2
\nonumber\displaybreak[0]\\
=&\mathrm{e}^{2(M+N)t} M^2 r_M^2(0)
\left\{
\left( N^2+b^2 \right) \left[ (M-N)^2+b^2 \right] \left[ T_{m;1,1}^2(t)+ T_{m;1,2}^2(t) \right] + \sum_{\chi=0}^{2m-3} t^{\chi} B_\chi(t)
\right\} \nonumber\\
=&\mathrm{e}^{2(M+N)t} M^2 r_M^2(0)
\left\{
\left( N^2+b^2 \right) \left[ (M-N)^2+b^2 \right] \left[ \frac{ r_{m,1}^2(0)+r_{m,2}^2(0)}{\left[(m-1)!\right]^2} t^{2m-2} + \sum_{\varphi=0}^{2m-3} t^{\varphi} B_\varphi(t) \right] + \sum_{\chi=0}^{2m-3} t^{\chi} B_\chi(t)
\right\} \nonumber\\
=&\mathrm{e}^{2(M+N)t} \left( C t^{2m-2} + \sum_{\psi=0}^{2m-3} t^{\psi} B_\psi(t) \right), \nonumber\\
\end{align}
where the constant $C= M^2 \left( N^2+b^2 \right) \left\{ (M-N)^2+b^2 \right\} r_M^2(0) \frac{ r_{m,1}^2(0)+r_{m,2}^2(0)}{\left[(m-1)!\right]^2}>0$, and all $B_\chi(t)$, $B_\varphi(t)$, and $B_\psi(t)$ are bounded functions.

\par
(c2) Suppose that $A$ has a diagonal block $J_p(N)$.
Let $r_M(t)$ denote the coordinate of $r(t)$ corresponding to the row of a diagonal block $J_1(M)$ of $A$,
and $r_N(t)$ the coordinate of $r(t)$ corresponding to the first row of the diagonal block $J_p(N)$ of $A$.
Then by (\ref{align P}) and (\ref{align J der}), we have
\begin{align}\label{align N P}
\begin{vmatrix}
\dot{r}_{M}(t) & \ddot{r}_{M}(t) \\[1em]
\dot{r}_{N}(t) & \ddot{r}_{N}(t)
\end{vmatrix}^2
=&\begin{vmatrix}
\mathrm{e}^{Mt} M r_M(0) & \mathrm{e}^{Mt} M^2 r_M(0) \\[1em]
\mathrm{e}^{Nt} \left\{ N P_{p;1}(t) + P_{p;2}(t) \right\}
 &\quad \mathrm{e}^{Nt} \left\{ N^2 P_{p;1}(t) + 2N P_{p;2}(t) + P_{p;3}(t) \right\}
\end{vmatrix}^2 \nonumber\\
=&
\left\{
\begin{aligned}
&\mathrm{e}^{2(M+N)t} \left( C t^{2p-2} + \sum_{\varphi=0}^{2p-3} t^{\varphi} B_\varphi(t) \right), &N\neq0,\\
&\mathrm{e}^{2(M+N)t} \left( \hat C t^{2p-4} + \sum_{\psi=0}^{2p-5} t^{\psi} B_\psi(t) \right), &N=0\ \text{and}\ p\geqslant2,
\end{aligned}\right.
\end{align}
where the constants $C, \hat C>0$, and all $B_\varphi(t)$ and $B_\psi(t)$ are bounded functions.

\par
By (c1) and (c2), we can give the expression of $V_2^2(t)$ in case (c).
In fact, we suppose
\begin{align*}
&C_{m_1}(N,b_1), C_{m_2}(N,b_2), \cdots, C_{m_k}(N,b_k),
J_{p_1}(N), J_{p_2}(N), \cdots, J_{p_l}(N) \\
&(m_1 \geqslant m_2 \geqslant\cdots\geqslant m_k, ~ \text{and} ~  p_1 \geqslant p_2 \geqslant\cdots\geqslant p_l)
\end{align*}
are the all diagonal blocks whose eigenvalues satisfy $\mathrm{Re}(\lambda)=N$.
Then by (\ref{align V2}), (\ref{align N C}), and (\ref{align N P}), we obtain
\begin{align}\label{align (c) V_2^2}
V_2^2(t)
\geqslant\mathrm{e}^{2(M+N)t} \left( C t^{\nu} + \sum_{\varphi=0}^{\nu-1} t^{\varphi} B_\varphi(t) \right),
\end{align}
where the constant $C>0$,
\begin{align}
\nu=
\left\{
\begin{aligned}
&\max\{ 2m_1-2, ~ 2p_1-2 \}, &N\neq0,\\
&\max\{ 2m_1-2, ~ 2p_1-4 \}, &N=0,
\end{aligned}\right.
\end{align}
and each $B_\varphi(t)$ is a bounded function.
\par
In what follows, $\Delta_1$ and $\Delta_2$ denote the maximum values of $\alpha$ in the terms of the form $\mathrm{e}^{\alpha t} t^\beta B_\gamma(t)$ in $V_3^2(t)$ and $V_2^4(t)$, respectively. Then by (\ref{align (c) V_2^2}), we have
\begin{align*}
\Delta_2=4(M+N).
\end{align*}
In the determinant of (\ref{align V3 det}), we can see that at most one row corresponds to a diagonal block with eigenvalue $M$, and the real parts of eigenvalues of the diagonal blocks corresponding to the other two rows are not greater than $N$, otherwise the determinant vanishes in $V_3^2(t)$. Hence we have
\begin{align*}
\Delta_1 \leqslant 2(M+2N).
\end{align*}
Thus, we have $\Delta_1-\Delta_2 \leqslant -2M < 0$.
It follows that
\begin{align*}
0
\leqslant \tau^2(t)
=\frac{ V_{3}^2(t) }{ V_{2}^4(t) }
& \leqslant \frac{ \mathrm{e}^{\Delta_1 t} F(t) + R(t) }{ \mathrm{e}^{\Delta_2 t} \left( \tilde C t^{2\nu} + \sum_{\psi=0}^{2\nu-1} t^{\psi} B_\psi(t) \right) }
\to 0
\quad
(t\to +\infty),
\end{align*}
where the constant $\tilde C>0$, each $B_\psi(t)$ is a bounded function, the function $F(t)$ is a linear combination of terms in the form of $t^\beta B_\gamma(t)$, and $R(t)$ is a linear combination of terms in the form of $\mathrm{e}^{\alpha t} t^\beta B_\gamma(t)$, where $\alpha<\Delta_1$,
and each $B_\gamma(t)$ is a bounded function. Hence we obtain
$\lim\limits_{t\to+\infty}\tau(t)=0$.

\par
Note that (a)(b)(c) cover all cases that satisfy $M>0$, which completes the proof.
\end{proof}

\par
Now we give Lemma \ref{lemma M=0 CH}.

\begin{lem} \label{lemma M=0 CH}
Under the assumptions of Proposition \ref{prop torsion Jordan}, if $M=0$, and $A$ has a diagonal block $C_m(0,b)\, (m\geqslant 2)$, then for any given $r(0)\in S$, we have $\lim\limits_{t\to+\infty}\tau(t)=0$.
\end{lem}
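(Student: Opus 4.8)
The plan is to estimate $\tau^2(t) = V_3^2(t)/V_2^4(t)$ directly and to show that it tends to $0$. Since $M = 0$, the exponent comparison used in the proof of Lemma \ref{lemma M>0} only yields a tie (the exponent $\mathrm{e}^{0\cdot t}$ governs both $V_3^2$ and $V_2^4$), so the decisive step will be a comparison of the \emph{polynomial degrees} of the factors attached to that exponent. First I would record that, since $A$ is invertible and carries a block $C_m(0,b)$ with $m\geq 2$, it is neither of the matrices in (\ref{align V2=0}); hence $V_2\not\equiv 0$, and Lemma \ref{lemma V2}(2) provides a $T>0$ with $V_2(t)>0$ for $t>T$, so $\tau(t)$ is given by (\ref{align torsion}) on $(T,+\infty)$. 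Moreover, because $A$ is invertible, the only diagonal blocks of $A$ with vanishing real part are blocks of the form $C_{m'}(0,b')$; let $m_1\geq m\geq 2$ be the largest size occurring among these.

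For the denominator I would apply (\ref{align C V_2^2}) to a block $C_{m_1}(0,b')$ of $A$, taking $a=0$: since $r(0)\in S$, the coordinates entering the constant $C$ there are nonzero, so $C>0$, and one gets
\begin{align*}
V_2^2(t)\geq C t^{4m_1-4}+\sum_{\varphi=0}^{4m_1-5}t^\varphi B_\varphi(t)\geq \frac{C}{2}\,t^{4m_1-4}
\end{align*}
for all sufficiently large $t$, hence $V_2^4(t)\geq \frac{C^2}{4}\,t^{8m_1-8}$ for large $t$.

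For the numerator, by (\ref{align V3}), (\ref{align V3 det}) and (\ref{align eta}), $V_3^2(t)$ is a finite sum of terms $\mathrm{e}^{\alpha t}t^\beta B_\gamma(t)$ with $\alpha\leq 6M=0$; the terms with $\alpha<0$ tend to $0$, and a $3\times 3$ determinant in (\ref{align V3}) can contribute a term with $\alpha=0$ only when all three of its rows belong to blocks of vanishing real part, i.e.\ to blocks $C_{m'}(0,b')$. Reading off (\ref{align T}) and (\ref{align C der}) with $a=0$, the two coordinates associated with the $i$-th pair of rows of such a block, together with their first three derivatives, all have degree at most $m'-i$ (with bounded trigonometric coefficients); in particular each $C_{m'}(0,b')$ block supplies exactly two rows of the maximal degree $m'-1$. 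Consequently, for any triple of rows taken from these blocks the sum of the three degrees is at most $3m_1-3$, so each corresponding $3\times 3$ determinant squared has degree at most $6m_1-6$. Summing over the finitely many triples (and using that the $\alpha<0$ terms are bounded, while $t^{6m_1-6}\to+\infty$), I obtain $V_3^2(t)\leq C' t^{6m_1-6}$ for all sufficiently large $t$.

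Combining the two estimates gives, for large $t$,
\begin{align*}
0\leq \tau^2(t)=\frac{V_3^2(t)}{V_2^4(t)}\leq \frac{C' t^{6m_1-6}}{\frac{C^2}{4}\,t^{8m_1-8}}=\frac{4C'}{C^2}\,t^{2-2m_1}\to 0\quad(t\to+\infty),
\end{align*}
since $m_1\geq 2$; therefore $\lim\limits_{t\to+\infty}\tau(t)=0$. The main obstacle is the degree bookkeeping for $V_3^2(t)$: one must be sure that the polynomial attached to $\mathrm{e}^{0\cdot t}$ in $V_3^2(t)$ has degree strictly below $8m_1-8$. This rests on the observation that a single block $C_{m'}(0,b')$ contributes only two rows of top degree $m'-1$, so a third high-degree row must come from another such block---which then must itself have size $m_1$, in which case the three degrees still sum to only $3m_1-3$---or from a strictly lower pair of rows; checking these sub-cases confirms the uniform bound $6m_1-6$, and $6m_1-6<8m_1-8$ holds precisely because $m_1>1$.
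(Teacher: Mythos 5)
Your proposal is correct and follows essentially the same route as the paper: since $M=0$ forces a tie in the exponential rates of $V_3^2$ and $V_2^4$, the decisive step is the polynomial-degree comparison $6(m_1-1)$ versus $8(m_1-1)$, which is exactly the paper's comparison of $\Gamma_1\leqslant 6(m-1)$ with $\Gamma_2=8(m-1)$. Your version is in fact slightly more careful in explicitly working with the largest block size $m_1$ among the zero-real-part blocks (the paper implicitly assumes this when stating its bound on $\Gamma_1$), but the substance of the argument is identical.
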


\begin{proof}
Suppose $M=0$, and $A$ has a diagonal block $C_m(0,b)\, (m\geqslant 2)$. Then from (\ref{align eta}), we have
$\Delta_1\leqslant 0$. From (\ref{align V2}) and (\ref{align C V_2^2}), we have $\Delta_2=0$.
\par
If $\Delta_1<\Delta_2=0$, then we have $\lim\limits_{t\to+\infty}\tau(t)=0$.
\par
If $\Delta_1=\Delta_2=0$, in order to obtain the limit of $\tau(t)$ as $t\to+\infty$, we need to compare the highest power of $t$ of terms in the form $\mathrm{e}^{0t} t^\beta B_\gamma(t)$ in the numerator and denominator of $\tau^2(t)$.
Let $\Gamma_1$ and $\Gamma_2$ denote the maximum value of $\beta$ in the terms of the form $\mathrm{e}^{0t} t^\beta B_\gamma(t)$ in $V_3^2(t)$ and $V_2^4(t)$, respectively.
Then we have
\begin{align}\label{align Gamma1}
\Gamma_1\leqslant 6(m-1).
\end{align}
In fact, by (\ref{align T}) and (\ref{align C der}), for a diagonal block $C_m(0,b)\, (m\geqslant 2)$, the functions $T_{m;1,1}(t)$ and $T_{m;1,2}(t)$ can reach the highest power $m-1$ of $t$, namely $t^{m-1}$, thus $r_{1,1}^{(s)}(t)$ and $r_{1,2}^{(s)}(t)\ (s=1,2,3)$ corresponding the first two rows of $C_m(0,b)\, (m\geqslant 2)$ can reach the highest power $m-1$ of $t$. Hence by (\ref{align V3}) and (\ref{align V3 det}), we obtain (\ref{align Gamma1}).
In addition, by (\ref{align V2}) and (\ref{align C V_2^2}), we have
\begin{align*}
\Gamma_2=2(4m-4)=8(m-1).
\end{align*}
Therefore $\Gamma_1\leqslant 6(m-1)<8(m-1)=\Gamma_2$.
It follows that
\begin{align*}
0
\leqslant \tau^2(t)
=\frac{ V_{3}^2(t) }{ V_{2}^4(t) }
& \leqslant \frac{ \sum_{\varphi=0}^{\Gamma_1} t^{\varphi} B_\varphi(t) + R(t) }{ C t^{\Gamma_2} + \sum_{\psi=0}^{\Gamma_2-1} t^{\psi} B_\psi(t) }
\to 0
\quad
(t\to +\infty),
\end{align*}
where the constants $C>0$, all $B_\varphi(t)$ and $B_\psi(t)$ are bounded functions, and $R(t)$ is a linear combination of terms in the form of $\mathrm{e}^{\alpha t} t^\beta B_\gamma(t)$, where $\alpha<0$,
and each $B_\gamma(t)$ is a bounded function. Hence we obtain
$\lim\limits_{t\to+\infty}\tau(t)=0$.
\end{proof}

\par
Lemma \ref{lemma M>0} and Lemma \ref{lemma M=0 CH} show that under the assumptions of Proposition \ref{prop torsion Jordan}, if the zero solution of the system is unstable, then $\lim\limits_{t\to+\infty}\tau(t)=0$. That is to say, Proposition \ref{prop torsion Jordan} (1) is proved. Thus we proved Theorem \ref{thm main 2} (1).

\subsection{Proof of Theorem \ref{thm main 2} (2)}

\

We have proved Proposition \ref{prop torsion Jordan} (1), and in order to prove Proposition \ref{prop torsion Jordan} (2), we only need to prove the following lemma.

\begin{lem} \label{lemma M=0 C2}
Under the assumptions of Proposition \ref{prop torsion Jordan}, if $M=0$, and in matrix $A$ only those $C_1(0,b)$ blocks are diagonal blocks satisfying $\mathrm{Re}(\lambda)=0$, then for any given $r(0)\in S$, we have $\lim\limits_{t\to+\infty}\tau(t)=0$ or $\lim\limits_{t\to+\infty}\tau(t)=C$, where the constant $C>0$.
\end{lem}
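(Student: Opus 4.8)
The plan is to show that, under these hypotheses, the functions $V_2^2(t)$ and $V_3^2(t)$ both converge as $t\to+\infty$, with $V_2^2(t)$ tending to a strictly positive constant $c_2$ and $V_3^2(t)$ to a nonnegative constant $c_3$; then $\tau^2(t)=V_3^2(t)/V_2^4(t)\to c_3/c_2^2=:C^2$, so that $\lim_{t\to+\infty}\tau(t)=C\geqslant 0$, which is $0$ when $c_3=0$ and a positive constant otherwise. This is exactly the assertion of the lemma.

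First I would split the solution as $r(t)=r^{c}(t)+r^{s}(t)$ following the block decomposition of $A$: here $r^{c}(t)$ is built from the coordinates in the blocks $C_1(0,b_1),\dots,C_1(0,b_k)$, with $k\geqslant 1$ because $M=0$ forces the existence of an eigenvalue of zero real part and, by hypothesis, every such eigenvalue arises from a $C_1(0,\cdot)$ block; and $r^{s}(t)$ is built from all remaining blocks, each of which satisfies $\mathrm{Re}(\lambda)<0$. By (\ref{align rk P}), (\ref{align J der}), (\ref{align ri C}) and (\ref{align C der}), $r^{s}(t)$ together with all its derivatives is a linear combination of terms $\mathrm{e}^{\alpha t}t^{\beta}B_\gamma(t)$ with $\alpha<0$, hence tends to $0$ as $t\to+\infty$. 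For a $C_1(0,b_j)$ block (the case $m=1$ of (\ref{align ri C})--(\ref{align T})) one has $r_{j,1}(t)=r_{2j-1}(0)\cos b_j t+r_{2j}(0)\sin b_j t$ and $r_{j,2}(t)=-r_{2j-1}(0)\sin b_j t+r_{2j}(0)\cos b_j t$, so this coordinate pair undergoes a rigid rotation with $r_{j,1}^2(t)+r_{j,2}^2(t)=\rho_j^2$ constant, where $\rho_j^2=r_{2j-1}^2(0)+r_{2j}^2(0)>0$ because $r(0)\in S$.

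The crucial step --- and the one requiring care --- is a structural fact about the center part. The $2\times 2$ matrix $\Lambda_j$ of the block $C_1(0,b_j)$ (so $a=0$, $b=b_j$) satisfies $\Lambda_j^2=-b_j^2 I_2$ and $\Lambda_j^3=-b_j^2\Lambda_j$, so with $D^2:=\mathrm{diag}(b_1^2,b_1^2,\dots,b_k^2,b_k^2)$ we obtain $\ddot r^{c}(t)=-D^2 r^{c}(t)$ and $\dddot r^{c}(t)=-D^2\dot r^{c}(t)$. Since each $\Lambda_j$ is antisymmetric and each coordinate pair has constant Euclidean length, a short computation gives that the Gram matrix of the three vectors $\dot r^{c}(t),\ddot r^{c}(t),\dddot r^{c}(t)$ is a \emph{constant} matrix $G^{c}$: one finds $\|\dot r^{c}\|^2=\sum_j b_j^2\rho_j^2$, $\|\ddot r^{c}\|^2=\sum_j b_j^4\rho_j^2$, $\|\dddot r^{c}\|^2=\sum_j b_j^6\rho_j^2$, $\langle\dot r^{c},\dddot r^{c}\rangle=-\sum_j b_j^4\rho_j^2$, while $\langle\dot r^{c},\ddot r^{c}\rangle=\langle\ddot r^{c},\dddot r^{c}\rangle=0$. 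By Proposition \ref{prop V} (equivalently, by the Cauchy--Binet identity), $V_2^2(t)$ and $V_3^2(t)$ are the $2\times 2$ and $3\times 3$ Gram determinants of $\dot r(t),\ddot r(t),\dddot r(t)$; in particular the parts of $V_2^2$ and $V_3^2$ coming only from the center coordinates are constants, and the $2\times 2$ one equals $(\sum_j b_j^2\rho_j^2)(\sum_j b_j^4\rho_j^2)>0$.

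It remains to reinstate $r^{s}(t)$. Every Gram entry $\langle r^{(a)}(t),r^{(b)}(t)\rangle$ with $a,b\in\{1,2,3\}$ differs from the center entry $\langle(r^{c})^{(a)}(t),(r^{c})^{(b)}(t)\rangle$ by a sum of pairings in which at least one factor is a derivative of $r^{s}(t)$; as the derivatives of $r^{c}(t)$ are bounded and those of $r^{s}(t)$ decay exponentially, each such correction tends to $0$. Hence $V_2^2(t)\to(\sum_j b_j^2\rho_j^2)(\sum_j b_j^4\rho_j^2)=:c_2>0$ and $V_3^2(t)\to\det G^{c}=:c_3\geqslant 0$; in particular $V_2(t)>0$ for all large $t$ (consistently with Lemma \ref{lemma V2}), so the expression (\ref{align torsion}) for $\tau(t)$ is valid there and $\tau^2(t)=V_3^2(t)/V_2^4(t)\to c_3/c_2^2$. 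Setting $C:=\sqrt{c_3}/c_2\geqslant 0$ yields $\lim_{t\to+\infty}\tau(t)=C$, which is $0$ if $c_3=0$ and a positive constant otherwise. As indicated, the one genuinely substantive point is the constancy of $G^{c}$; the rest is routine control of exponentially small remainders.
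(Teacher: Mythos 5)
Your argument is correct and reaches exactly the paper's conclusion; the underlying strategy (isolate the $C_1(0,b_j)$ blocks, let the negative-real-part part decay, and compute the limit of $\tau^2$ from the surviving piece) is the same, but your execution of the key computation is genuinely cleaner than what the paper records. The paper splits into the cases $s=1$ and $s>1$, runs the $\Delta_1\leqslant\Delta_2$ exponent comparison, and then simply asserts the closed-form limit ``by direct calculation''; you instead observe that for $a=0$ each $\Lambda_j$ is antisymmetric with $\Lambda_j^2=-b_j^2I_2$, so the Gram matrix of $\dot r^{c},\ddot r^{c},\dddot r^{c}$ is constant, and you read off $V_2^2\to c_2>0$ and $V_3^2\to c_3\geqslant0$ as Gram determinants. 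This packages the paper's unexplained computation into one structural fact, subsumes the $s=1$ case automatically, and via the Lagrange identity
\begin{align*}
\Bigl(\sum_j b_j^2\rho_j^2\Bigr)\Bigl(\sum_j b_j^6\rho_j^2\Bigr)-\Bigl(\sum_j b_j^4\rho_j^2\Bigr)^2
=\sum_{i<j}b_i^2b_j^2\rho_i^2\rho_j^2\bigl(b_i^2-b_j^2\bigr)^2
\end{align*}
reproduces the paper's explicit formula for $\lim_{t\to+\infty}\tau^2(t)$, including the dichotomy that the limit vanishes precisely when all $b_j$ coincide. Two minor remarks: since the center and stable blocks occupy disjoint coordinates, the mixed Gram corrections $\langle (r^{c})^{(a)},(r^{s})^{(b)}\rangle$ are in fact identically zero (not merely decaying), which slightly simplifies your last step; and you should note explicitly that $\rho_j>0$ requires $r(0)\in S$, which you do use. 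No gaps.
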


\begin{proof}
Set
\begin{align*}
A=
\begin{pmatrix}
C_1(0,b_1) &&& \\[0.2em]
&C_1(0,b_2) &&& \\[0.2em]
&&\ddots && \\[0.2em]
&&&C_1(0,b_s) & \\[0.2em]
&&&& \tilde A
\end{pmatrix},
\end{align*}
where all eigenvalues of $\tilde A$ have negative real parts.
\par
(1) If $s=1$, then by (\ref{align V2}) and (\ref{align C V_2^2}), we have $\Delta_2=0$.
In the determinant of (\ref{align V3 det}), we can see that at most two rows correspond to the diagonal block $C_1(0,b_1)$, and the real part of eigenvalue of the diagonal block corresponding to the other row is negative.
Hence $\Delta_1<0$.
It follows that $\lim\limits_{t\to+\infty}\tau(t)=0$.
\par
(2) If $s>1$, then $\Delta_1\leqslant 0=\Delta_2$. By direct calculation, we have
\begin{align*}
\lim\limits_{t\to+\infty}\tau^2(t)
&=
\frac{
\sum_{1\leqslant i<j\leqslant s} b_i^2 b_j^2 \left( b_i^2-b_j^2 \right)^2 \left( r_{i;1}^2(0)+r_{i;2}^2(0) \right) \left( r_{j;1}^2(0)+r_{j;2}^2(0) \right)
}
{
\left\{ \sum_{k=1}^s b_k^2 \left( r_{k;1}^2(0)+r_{k;2}^2(0) \right) \right\}^2
\sum_{l=1}^s b_l^4 \left( r_{l;1}^2(0)+r_{l;2}^2(0) \right)
}
\\&=
\left\{
\begin{aligned}
&0, &b_1=b_2=\cdots=b_s,\\
&C>0, &\text{else}.
\end{aligned}\right.
\end{align*}

\end{proof}

\par
By Proposition \ref{prop torsion Jordan} (1) and Lemma \ref{lemma M=0 C2}, we proved Proposition \ref{prop torsion Jordan} (2), which completes the proof of Theorem \ref{thm main 2}.

\subsection{Remark}

\

In Theorem \ref{thm main 2} and Proposition \ref{prop torsion Jordan}, the condition that $A$ is invertible cannot be removed. In fact, we have the following two examples.
\par
(1) Let
\begin{align*}
A=
\begin{pmatrix}
0 & 1 & 0 & 0 \\
0 & 0 & 0 & 0 \\
0 & 0 & -1 & 1 \\
0 & 0 & -1 & -1
\end{pmatrix}.
\end{align*}
Then by (\ref{align tor^2}), we have
\begin{align*}
\tau^2(t)
=\frac{\mathrm{e}^{4t}r_2^2(0)}
{ \left\{\mathrm{e}^{2t}r_2^2(0)+r_3^2(0)+r_4^2(0) \right\}^2 }
\end{align*}
for any given $r(0)\in S$. It follows that
\begin{align*}
\lim\limits_{t\to+\infty}\tau(t)=\frac{1}{|r_2(0)|}>0.
\end{align*}
Nevertheless, since $\det A=0$, we cannot obtain stability from $\lim\limits_{t\to+\infty}\tau(t)\neq 0$. In fact, noting that $A$ is a matrix in real Jordan canonical form which has a diagonal block $J_2(0)$, we know that the zero solution of the system is unstable.
\par
(2) Let
\begin{align*}
A=
\begin{pmatrix}
-1 & 1 & 0 & 0 \\
0 & -1 & 1 & 0 \\
0 & 0 & -1 & 0 \\
0 & 0 & 0 & 0
\end{pmatrix}.
\end{align*}
Then by a direct calculation, we have
\begin{align*}
\lim\limits_{t\to+\infty}\tau(t)=+\infty
\end{align*}
for any given $r(0)\in S$.
Nevertheless, since $\det A=0$, the zero solution of the system is not asymptotically stable.

\section{Examples}\label{Section Examples}
In this section, we give two examples, which correspond to Theorem \ref{thm main 1} and Theorem \ref{thm main 2}, respectively.

\subsection{Example 1}

\

Let $r(t)=\left(r_1(t),r_2(t),r_3(t),r_4(t)\right)^\mathrm{T}\in\mathbb{R}^4$, and
\begin{align*}
A=
\begin{pmatrix}
-25 & -8 & -39 & 19 \\[0.7ex]
-14 & -10 & -26 & 14 \\[0.7ex]
9 & 0 & 7 & -9 \\[0.7ex]
-5 & -8 & -21 & -1
\end{pmatrix}.
\end{align*}
Then $\dot{r}(t)=Ar(t)$ is a four-dimensional linear time-invariant system, and
$
\det A=1\, 320\neq0.
$
Set
\begin{align*}
E=\left\{ r(0)\in\mathbb{R}^4 \Bigg| \prod_{i=1}^4 {v_{i}(0)}\neq0 \right\},
\end{align*}
where
\begin{align*}
\left\{
\begin{aligned}
v_{1}(0)&=-r_1(0)-2r_3(0)+r_4(0),\\
v_{2}(0)&=-r_1(0)+2r_2(0)+r_3(0),\\
v_{3}(0)&=-r_1(0)+2r_2(0)+2r_3(0)+r_4(0),\\
v_{4}(0)&=r_1(0)+r_3(0)-r_4(0).
\end{aligned}\right.
\end{align*}
Then the Lebesgue measure of $E$ satisfies
$
m(E)=+\infty.
$
By direct calculation, the limits of the first curvature and the torsion of the trajectory $r(t)$ as $t\to +\infty$ are
$\lim\limits_{t\to+\infty}\kappa(t)=0$
and $\lim\limits_{t\to+\infty}\tau(t)=0$
for $r(0)\in E$, respectively.
Nevertheless, the third curvature $\kappa_3(t)$ of the trajectory $r(t)$ satisfies
\begin{align*}
\lim\limits_{t\to+\infty}\kappa_3(t)=+\infty
\end{align*}
for any $r(0)\in E$.
Consequently, from Theorem \ref{thm main 1}, the zero solution of the system is asymptotically stable.
\par
The graph of the function $\kappa_3(t)$ is shown in Figure \ref{fig k3}, where $r(0)=\left(1,1,1,1\right)^\mathrm{T}$.

\begin{figure}[htbp]
\centering
\includegraphics[height=5cm]{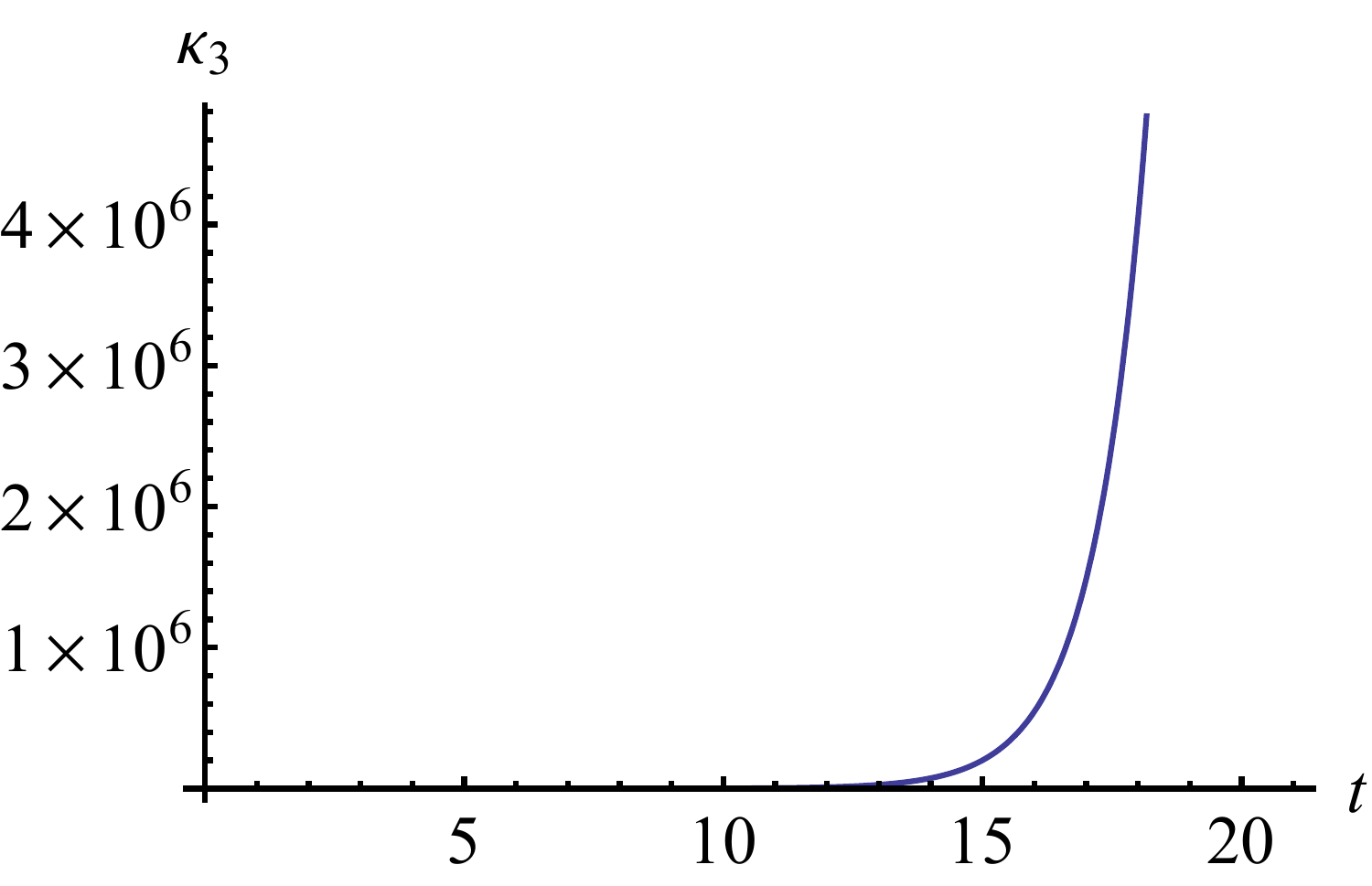}
\caption{Function $\kappa_3(t)$.}
\label{fig k3}
\end{figure}

\subsection{Example 2}

\

We consider a popular model in classical mechanics called coupled oscillators (cf. \cite{Schwartz}). Two masses $P$ and $Q$ are attached with springs. Assume that the masses are identical, i.e. $m_P=m_Q=m$, but the spring constants are different, as shown in the Figure \ref{classical chain}.


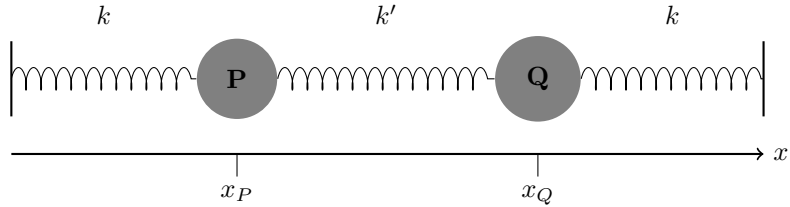
\begin{figure}[htbp]
	\centering
	\begin{tikzpicture}
	\node[circle,fill=gray,inner sep=2.5mm] (a1) at (3,0){$\bf P$};
	\node[circle,fill=gray,inner sep=2.5mm] (a2) at (7,0){$\bf Q$};
	\draw[decoration={aspect=0.3, segment length=2mm, amplitude=1.5mm,coil},decorate](0,0)--(a1)node[midway,above]{$\begin{array}{c}k\\ {}\end{array}$};
	\draw[decoration={aspect=0.3, segment length=2mm, amplitude=1.5mm,coil},decorate](a1)--(a2)node[midway,above]{$\begin{array}{c}k'\\ {}\end{array}$};
	\draw[decoration={aspect=0.3, segment length=2mm, amplitude=1.5mm,coil},decorate](a2)--(10,0)node[midway,above]{$\begin{array}{c}k\\ {}\end{array}$};
	\draw[->,thick](0,-1)--(10,-1) node[right]{$x$};
	\draw (3,-1)--(3,-1.3) node[anchor=north]{$x_P$};
	\draw (7,-1)--(7,-1.3) node[anchor=north]{$x_Q$};
	\draw[thick] (0,-0.5)--(0,0.5);
	\draw[thick] (10,-0.5)--(10,0.5);
	\end{tikzpicture}
	\caption{1D Coupled Oscillators.}
	\label{classical chain}
\end{figure}
\par
Let $x_P$ be the displacement of $P$ from its equilibrium and $x_Q$ be the displacement of $Q$ from its equilibrium. Holding $Q$ fixed and moving $P$, the force on $P$ is
\begin{align*}
F_{1P}=-kx_P-k'x_P.
\end{align*}
Holding $P$ fixed and moving $Q$, the force on $P$ is
\begin{align*}
F_{2P}=k'x_Q.
\end{align*}
Thus by Newton's second law we have
\begin{align*}
m\ddot{x}_P=F_{1P}+F_{2P}=-(k+k')x_P+k'x_Q.
\end{align*}
Similarly, for $Q$ we have
\begin{align*}
m\ddot{x}_Q=-(k+k')x_Q+k'x_P.
\end{align*}
Introducing two variables $v_P=\dot{x}_P$ and $v_Q=\dot{x}_Q$, the above equations are equivalent to the following linear system
\begin{align}\label{align system osc}
\dot{\begin{pmatrix}x_P\\[0.5em] x_Q\\[0.5em] v_P\\[0.5em] v_Q\end{pmatrix}}
=\begin{pmatrix}0& 0&1&0\\[0.5em] 0& 0& 0 & 1\\[0.5em] -\frac{k+k'}{m}&\frac{k'}{m}& 0&0\\[0.5em] \frac{k'}{m}& -\frac{k+k'}{m}& 0&0 \end{pmatrix}
\begin{pmatrix}x_P\\[0.5em] x_Q\\[0.5em] v_P\\[0.5em] v_Q\end{pmatrix}.
\end{align}
For simplicity we denote the system by $\dot{r}(t)=Ar(t)$, where
\begin{align*}
r(t)=\begin{pmatrix}x_P(t)\\[0.5em] x_Q(t)\\[0.5em] v_P(t)\\[0.5em] v_Q(t)\end{pmatrix},
\quad
A=\begin{pmatrix}0& 0&1&0\\[0.5em] 0& 0& 0 & 1\\[0.5em] -\frac{k+k'}{m}&\frac{k'}{m}& 0&0\\[0.5em] \frac{k'}{m}& -\frac{k+k'}{m}& 0&0 \end{pmatrix}.
\end{align*}
\par
Set
\begin{align*}
E=\left\{ r(0)\in\mathbb{R}^4 \Bigg| \left(r_1^2(0)-r_2^2(0)\right)\left(r_3^2(0)-r_4^2(0)\right)\neq0 \right\}.
\end{align*}
Then the Lebesgue measure of $E$ satisfies $m(E)=+\infty.$
By direct calculation, the torsion $\tau(t)$ of the trajectory $r(t)$ is a periodic function and
$
\lim\limits_{t\to+\infty}\tau(t)
$
does not exist
for any $r(0)\in E$.
Hence by Theorem \ref{thm main 2}, the zero solution of the system (\ref{align system osc}) is stable.
\par
As an example, we suppose that $k/m=1$ and $k'/m=2$, and the initial value $r(0)=\left(1,2,1,2\right)^\mathrm{T}$. Then we have
\begin{align*}
\tau^2(t)=
\frac{\sqrt{5} \sin \left(2 \sqrt{5} t\right)+2 \cos \left(2 \sqrt{5} t\right)+17}{2 \left\{\sqrt{5} \sin \left(2 \sqrt{5} t\right)+2 \cos \left(2 \sqrt{5} t\right)-11\right\}^2}.
\end{align*}
The graph of the function $\tau^2(t)$ is shown in Figure \ref{fig tau^2}.

\begin{figure}[htbp]
\centering
\includegraphics[height=5cm]{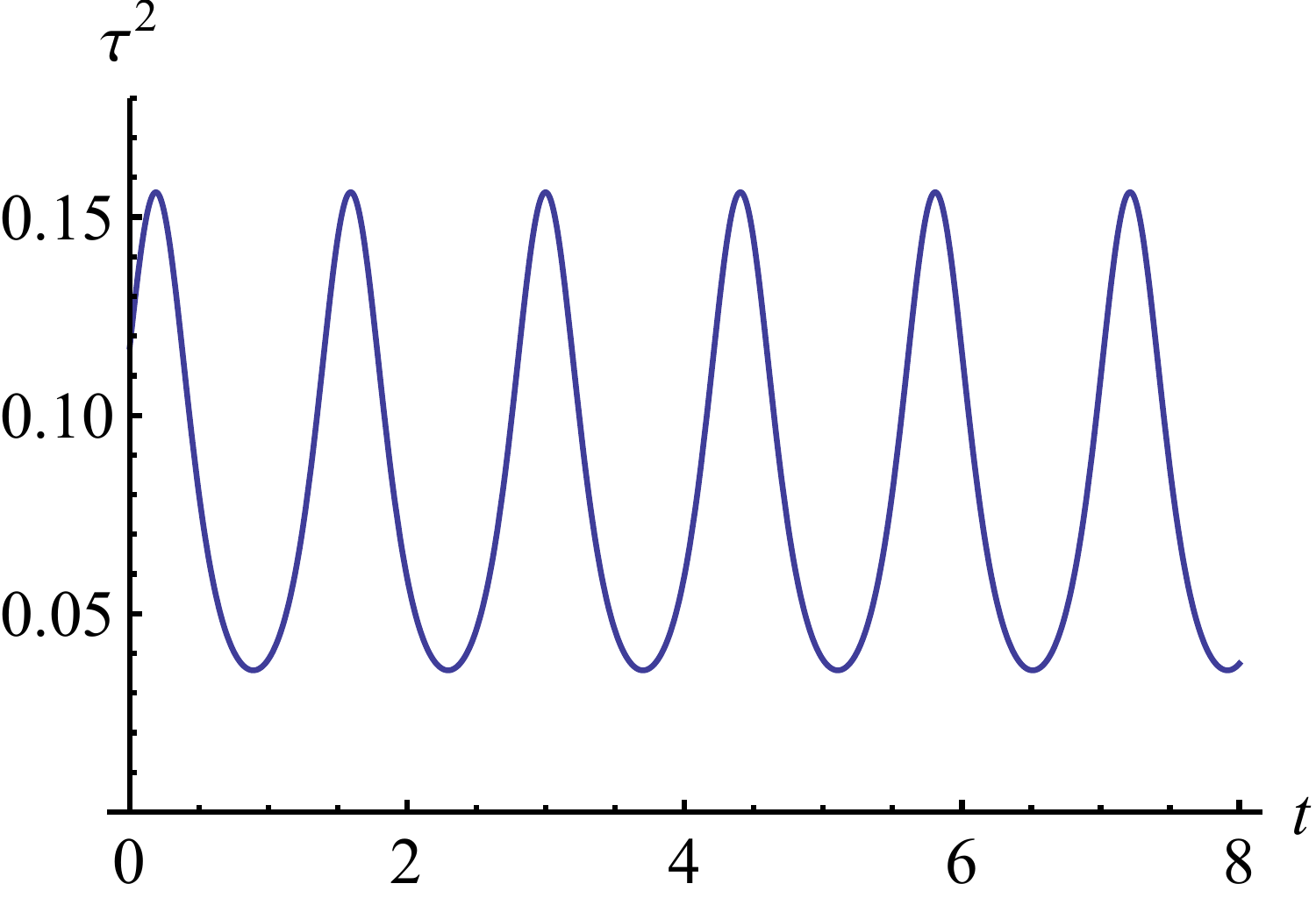}
\caption{Function $\tau^2(t)$.}
\label{fig tau^2}
\end{figure}

\section{Conclusion and Future Work}\label{Section Conclusion}
The main contribution of this paper is to give a geometric description of stability of linear time-invariant systems in arbitrary dimension. Unlike traditional methods based on linear algebra, we focus on the curvature of curves. Specifically, the main results of this paper, Theorem \ref{thm main 1} and Theorem \ref{thm main 2} are proved. For the case where $A$ is similar to a real diagonal matrix, Theorem \ref{thm main 1} gives a relationship between the $i$th curvature $(i=1,2,\cdots)$ of the trajectory and the stability of the zero solution of the system $\dot{r}(t)=Ar(t)$. Further, Theorem \ref{thm main 2} establishes a torsion discriminance for the stability of the system in the case where $A$ is invertible.
\par
For each theorem, we give an example to illustrate the result. In particular, we use the coupled oscillators as an example of the torsion discrimination.
\par
In the future, we will continue to use geometric methods to describe the properties of other kinds of control systems.

\section*{Acknowledgment}
The research is supported partially by science and technology innovation project of Beijing Science and Technology Commission (Z161100005016043).

\end{document}